\newcommand{\eps}{\varepsilon}
\newtheorem{theorem}{Theorem}[section]
\newtheorem{proposition}{Proposition}[section]
\theoremstyle{remark}
\newtheorem{remark}{Remark}[section]
\newcommand{\corr}[1]{{\color{black}{{}#1}}}
\newcommand{\corrThm}[1]{{\color{black}{{}#1}}}
\title{Some mathematical models for flagellar activation mechanisms}
\author[1]{François Alouges}
\affil[1]{Centre Borelli, ENS Paris-Saclay, CNRS\\ Université Paris-Saclay, 4 avenue des sciences\\
91190 Gif-sur-Yvette, France\\ Institut Universitaire de France (IUF)}
\author[2]{Irene Anello}
\author[2]{Antonio DeSimone}
\affil[2]{Scuola Internazionale Superiore di Studi Avanzati\\ via Bonomea 265, I-34136 Trieste, Italy}
\author[3]{Aline Lefebvre-Lepot}
\affil[3]{CNRS, Fédération de Mathématiques de CentraleSupélec\\
9, rue Joliot Curie,
91190 Gif-sur-Yvette}
\author[4]{Jessie Levillain}
\affil[4]{CMAP, CNRS, École polytechnique\\ Institut Polytechnique de Paris \\
route de Saclay, 91120 Palaiseau, France}
\begin{document}

\maketitle

\begin{abstract}
\corr{This paper focuses on studying a model for dyneins,  cytoskeletal motor proteins responsible for axonemal activity}. The model is a coupled system of partial differential equations inspired by \cite{Julicher1995,julicher_molecular_1998} and incorporating two rows of molecular motors between microtubules filaments. Existence and uniqueness of a solution is proved, together with the presence of a supercritical Hopf bifurcation. Additionally, numerical simulations are provided to illustrate the theoretical results. A brief study on the generalization to $N$-rows is also included.
\end{abstract}

\section*{Introduction}
\corr{
Molecular motors are proteins that move along tubulin filaments, converting chemical energy (ATP) into mechanical work. Our focus is on dyneins, a specific type of molecular motor found in the axoneme, the cytoskeletal structure of cilia. The axoneme consists of nine pairs of tubulin filaments\footnote{On some biological systems there is also a central pair of microtubules \cite{cells8020160}. We don't consider it in the present study.}, known as microtubules, and functions as an active structure due to the dyneins positioned between adjacent microtubule pairs. These molecular motors attach to one microtubule pair and walk along the neighboring pair, generating local sliding between them. When the microtubules are considered elastic, as in cilia, this sliding induces bending. Along the microtubules, the molecular structure is periodic with periodicity $\ell \ll L$ where $L$ is the total length of the filament.

\begin{figure}[!ht]
\centering
\includegraphics[width=0.6\linewidth]{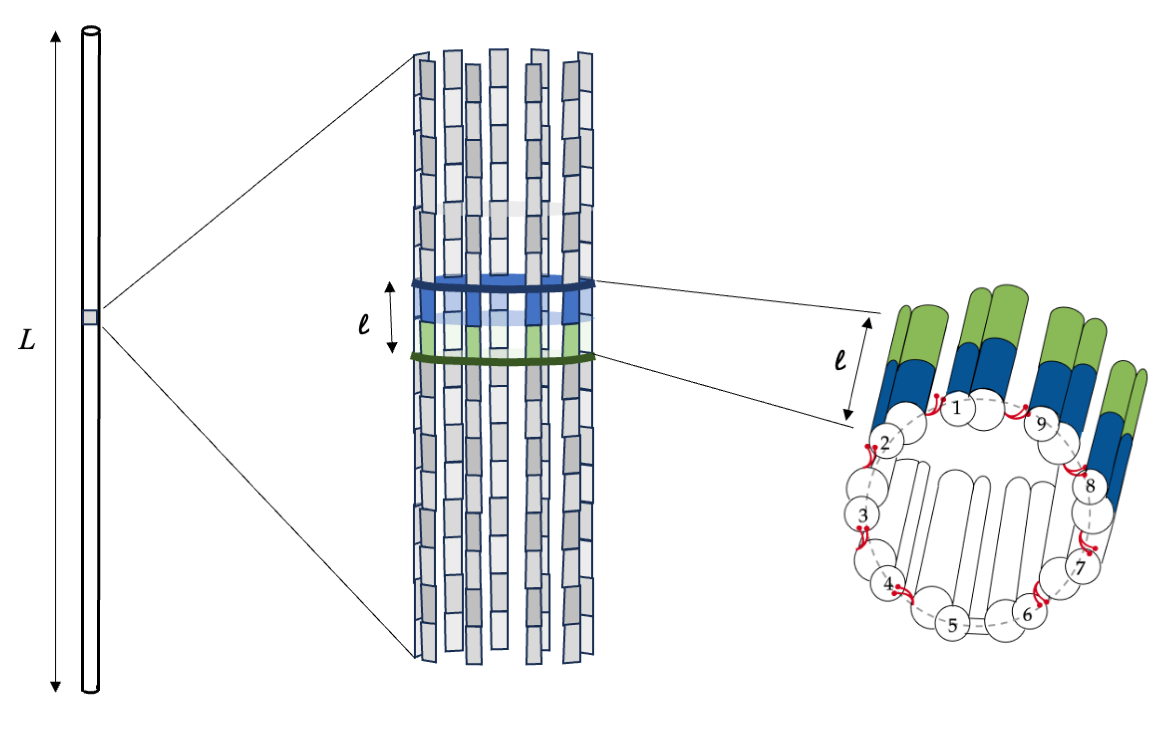}
  \caption {\corr{A microscopic slice of the axoneme (on the right), zoomed in from the whole filament (on the left). The periodic structure is highlighted by an alternation of green and blue sections, modeling preferred binding sites for molecular motors.}}
        \label{fig:axoneme}
\end{figure}
    
We analyze a microscopic slice of thickness $\ell$ of the microtubules (See Figure \ref{fig:axoneme}), 
focusing exclusively on the interaction between molecular motors and the filaments. At this scale, the axoneme is considered {\textit{isolated}}, i.e. there is no fluid to be taken into account, and the filaments are treated as rigid. The ultimate objective of this article is to model the entire structure of such a slice of the isolated axoneme, composed of nine microtubule pairs. 

As a starting point, we adopt the \textit{one-row model} described in \cite{Julicher1995, Julicher1997Modeling, julicher_molecular_1998,Julicher1997, guerin2011dynamical}, which characterizes the collective motion of a single row of molecular motors attached to a moving filament against a fixed counterpart. A key finding of this model is the emergence of mesoscopic order from the independent activity of individual molecular motors. When an elastic force is present, an analysis of the linearized system coming from the model shows that the system might possess a supercritical Hopf bifurcation instability \cite{guerin2011dynamical}. Numerical simulations \cite{guerin2011dynamical,Julicher1995, Julicher1997Modeling,Julicher1997, julicher_molecular_1998} confirm this kind of behavior.  

Building on the one-row model, and in order to model the circular structure of the slice of the axoneme, we introduce a second row of molecular motors, as shown in Figure~\ref{fig:2tubules_cross}(b), forming a \textit{two-row model} that symmetrizes the previous framework. This simplified structure was already suggested e.g. in \cite{Sartori2016} Fig. 1. A \& B, or in  \cite{riedel-kruse_how_2007} Fig. 2., A \& B, though not studied as an isolated system. We further generalize this to an \textit{N-row model}, where $N$ represents the number of motor rows -- typically 8 -- and, consequently, the number of microtubule pairs. Both models are analytically described in the next section.  

   \begin{figure}[!ht]           
\centering
 \subfloat[]{\includegraphics[width = 0.25 \linewidth]{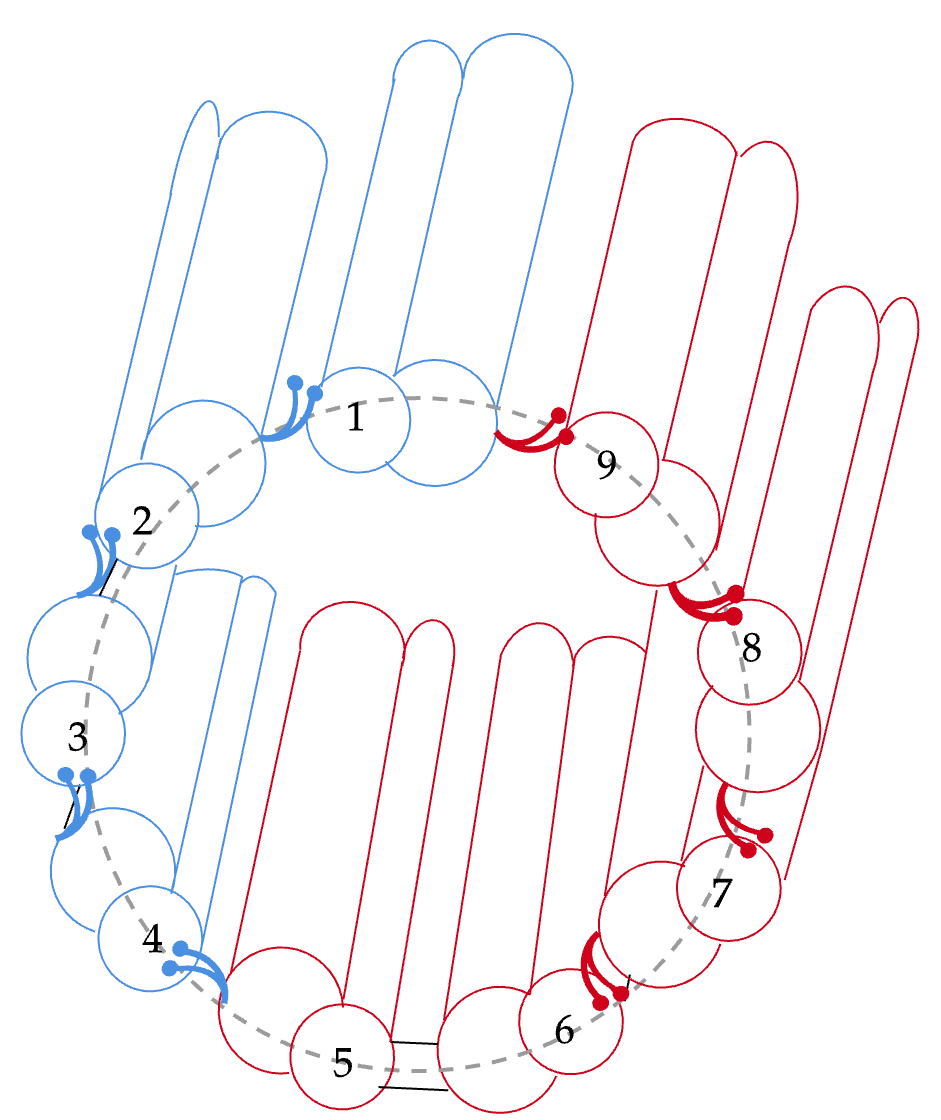}}
 \hfil
    \subfloat[]{\includegraphics[width = 0.25 \linewidth]{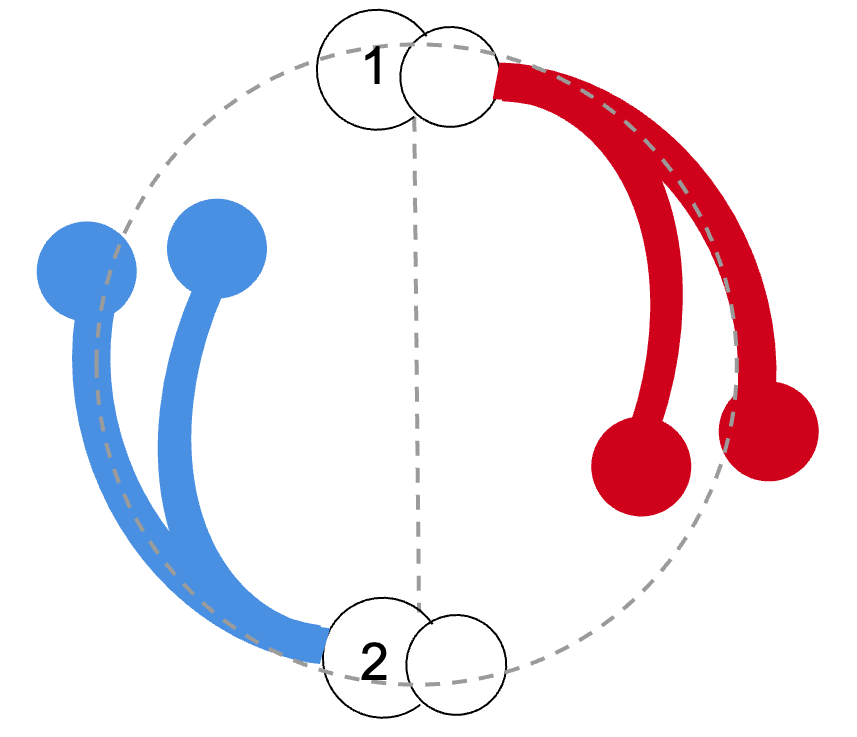}}
\hfil
    \subfloat[]{\includegraphics[width=0.5\linewidth]{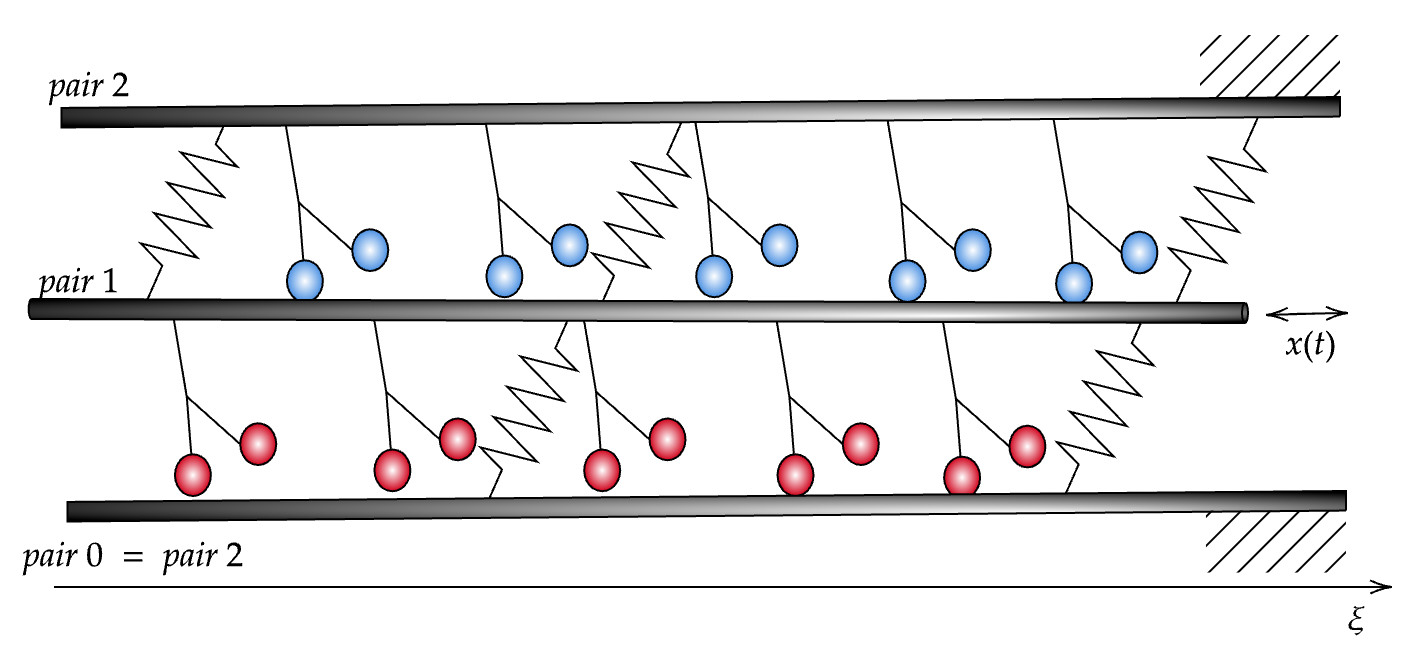}}

  \caption{(a) Cross section of the axoneme. (b) Cross section of the axoneme where we group the filaments 1-4 (blue) and 5-9 (red) together. (c) Unfolding of the axoneme in (b) with two opposite microtubule pairs.}
\label{fig:2tubules_cross}
    \end{figure}
    
The novelty of our work lies in four key contributions: (i) We generalize the one-row model by first introducing an alternative description for the isolated axoneme: the two-row model. We then use this model as a first step towards a more realistic $N$-row model.
(ii) Then we prove the existence and uniqueness of solutions for the two-row model which is composed of coupled non-linear PDEs. This result fills a gap in the existing theoretical literature and also extends to the one-row or $N$-row models. (iii) We establish rigorously the presence of a supercritical Hopf bifurcation in the system for general parameters. The non-linearity of the equations is treated through the Center Manifold technique. We further compute the amplitude of the periodic solution after the instability has occurred. (iv) Eventually, we introduce a specific upwind scheme to perform numerical simulations on the \textit{N-row model} for $ N \geq 1 $.}

\corr{The paper is organized as follows: the first section presents the derivation of the one-row, two-row and $N$-row models and the motivation underlying the increasing biology complexity. In the second section, we state and prove the existence and uniqueness of the solutions (Theorem \ref{th: Ex and Uniqueness}) and the existence of the Hopf bifurcation (Theorem \ref{th: Hopf bifurcation}). In the final section, we describe the numerical scheme employed in the paper, present the numerical simulations, and explore the questions arising from the $N$-row structure.}

\section{Motivation and Modeling}
\subsection{The one-row model}
\corr{In all the presented models, the molecular motors are described by a generalized version of the stochastic two-state model proposed in \cite{Julicher1995}. The key difference between the one-, two-, and $ N $-row models lies in the number of motor rows, which determines the number of filaments representing the slice of the isolated axoneme.

In the two-state model, each motor lies between two filaments. It is attached to one of them, and has two identical heads (head $A$ and head $B$), each of which can be either bound or unbound to the opposite filament. The motor can exist in two distinct chemical states: state $A$, where head $A$ is bound and head $B$ is unbound, and state $B$, where the opposite happens. Importantly, the model assumes that both heads cannot be simultaneously unbound or bound. In the one-row model, motors are attached to the moving top filament and exert a force on the stationary bottom filament. Between the pair of filaments, passive elastic and viscous elements resist the motion. They are modeled by the positive constants $k$ (elastic coefficient) and $\eta$ (viscous coefficient), respectively.

Along its length the filaments present a microscopic periodic structure of period $\ell$ that drives the chemical behavior of the heads. Therefore, we consider a slice of thickness $\ell$ of the isolated axoneme. Then, each state $ i = A, B $ experiences a ($\ell$-periodic) potential energy $ W_i(\xi) $ at position $ \xi \in [0,\ell] $ on the fixed bottom filament. The $\ell$-periodic transition rates $\omega_A(\xi)$ and $\omega_B(\xi)$ represent the probability per unit time for a motor to switch from state $ A $ to state $ B $ and from state $B$ to state $A$ respectively. The transition rates also depend on the ATP concentration $\Omega$, i.e. $ \omega_i = \omega_i(\xi; \Omega) $, as ATP provides the energy for state transitions. A detailed explanation of this dependence is given in \cite{Julicher1997Modeling}. Note that more complex approaches exist in the literature, for which transition rates are also force-dependent. We refer the reader to \cite{Lattanzi2002, Lattanzi2004, riedel-kruse_how_2007} for detailed explanations on such models.

In the limit of an infinite number of motors on $[0,\ell]$, we introduce the densities $ \tilde{P}_{i}(\xi) \geq 0 $ (for $ i = A, B $), which represent the probability of finding a motor at position $ \xi $ in state $ i $. These densities satisfy $ \tilde{P}_A(\xi) + \tilde{P}_B(\xi) = 1/\ell $, reflecting the fact that a motor is always in either state $A$ or state $B$. The position of the top filament is denoted by $ x(t) $, with velocity $ v(t) = \frac{d}{dt}x(t) $.

The one-row model, as described in \cite{Julicher1995}, expresses the motion of the system through a transport equation for $ P = \tilde{P}_A $:
\begin{equation} \label{eq: full pde 1row}
\left\{\begin{array}{l}
     \partial_t P(\xi,t) + v(t)\partial_{\xi} P(\xi,t) = -( \omega_A +\omega_B) P(\xi,t) + \omega_B/\ell  \\
     v(t) = \displaystyle \frac{1}{\eta}  \left(\int\limits_0^{\ell}  P(\xi,t)\partial_{\xi} \Delta W(\xi)d{\xi} - kx(t)\right), 
\end{array}\right.   
\end{equation} 
where $ \Delta W(\xi) = W_B(\xi) - W_A(\xi) $. The second equation in system \eqref{eq: full pde 1row} represents the force balance on the moving filament:
\begin{equation*}
    f_{ext} - \eta v - kx + f_{mot} = 0,
\end{equation*}
with zero external force ($ f_{ext} = 0 $) since we don't consider fluid-structure interaction here and where the force due to the motors is given by
\begin{equation*}
f_{mot}(t) = \int\limits_0^{\ell} P(\xi,t)\partial_{\xi} \Delta W (\xi)d\xi.
\end{equation*}

The system \eqref{eq: full pde 1row} admits a stationary, non-moving solution:
\begin{equation*}
    P_{eq}(\xi; \Omega) = \frac{\omega_B}{\ell(\omega_A+\omega_B)},\quad v_{eq} = 0,\quad x_{eq} = \frac{1}{k}\int_0^l \,P_{eq}(\xi; \Omega)\partial_{\xi}\Delta W(\xi)d\xi.
\end{equation*}

Depending on the specific forms of the transition rates $ \omega_i $ and potentials $ W_i $, a threshold ATP concentration $ \Omega = \Omega_0 $ may exist, above which this stationary solution becomes unstable, leading to oscillatory behavior in the system (see \cite{guerin2011dynamical,Julicher1995, Julicher1997Modeling,Julicher1997, julicher_molecular_1998})}.

\subsection{The two-row model}\label{sec: N=2}
We now derive in detail the two-row model, which is depicted in Figure \ref{fig:2tubules_cross}(b). The two-row model is a simplified version of the circular structure of the axoneme  (\cite{Sartori2016,gadelha2023}), forming a cylinder with only two pairs of microtubules linked by two rows of motors, as shown in Figure \ref{fig:2tubules_cross}(a). This model is a first step towards the $N$-row model presented in section \ref{sec: Nlayers}, where $N$ corresponds to the number of motor rows. 

Moreover, the two-row model symmetrizes the one-row model.
In the one-row model \cite{Julicher1995}, even without any external force, the moving filaments experiences a non-zero equilibrium displacement $x(t)=x_{eq}$, which disrupts axonemal symmetry and causes bending at rest. Jülicher and Camalet mentioned this issue in \cite{camalet2000generic}, solving it by choosing symmetrical potentials, as in our case, and symmetrical transition rates, namely $|\omega_i(\xi+l/2)|=|\omega_i(-\xi+l/2)|$, with $\xi \in [0,\ell]$ and $i = A, B$. Instead, in this work, we consider general non-symmetrical transition rates and introduce a second row of molecular motors, producing a zero equilibrium $x_{eq}=0$ at rest.

 Unfolding the two-row structure, we may consider three pairs of filaments, numbered from 0 to 2 where the pairs 0 and 2 are identified and fixed, while the central pair 1, may move (See Figure \ref{fig:2tubules_cross}(c)). We call $x(t)$ its displacement and $v(t)= \frac{d}{dt}x(t)$ its velocity. We define $P_1(\xi,t) = \tilde{P}_{1,A}(\xi,t)$ and $P_2(\xi,t)= \tilde{P}_{2,A}(\xi,t)$ as the probabilities for the motors to be in state $A$ at position $\xi$ and time $t$, when attached to the pair 1 (bottom row) or to the pair 2 (top row), respectively. We assume that the sum of the transition rates is uniform, (see \cite{camalet2000generic,guerin2011dynamical}),
\begin{equation} \label{eq: uniformity om1+om2}
     \omega_A(\xi; \Omega) +\omega_B(\xi; \Omega) = a_0(\Omega),
\end{equation}
where $a_0(\Omega)>0$, and obtain two transport equations for $\xi \in [0, \ell]$ and $t>0$:
\begin{align} \label{eq: original full two layers P1 P2}
\left\{\begin{array}{ll}
    &\displaystyle \frac{\partial P_1}{\partial t}(\xi,t) + v(t) \frac{\partial P_1}{\partial \xi}(\xi,t) = -a_0(\Omega) P_1(\xi,t) + \frac{\omega_B(\xi; \Omega)}{\ell}, \\
   & \displaystyle \frac{\partial P_2}{\partial t}(\xi,t)  = -a_0(\Omega)P_2(\xi,t) + \frac{\omega_B(\xi-x(t); \Omega)}{\ell}.
    \end{array}
    \right.
\end{align}

Additionally, the force balance equation on the central filament reads:
\begin{equation}\label{eq: force balance}
    -2\eta v - 2kx +f_{mot} = 0,
\end{equation}
where as before, no external force is considered, $k x$ and $\eta v$ represent the elastic and viscous resistances, respectively and where $f_{mot}$ is the active force exerted by the motors, which is now given by
    \begin{equation}\label{eq: motor force two layers system}
    f_{mot}(t) = \int\limits_0^{\ell} (P_1(\xi,t)\partial_{\xi} \Delta W (\xi) - P_2(\xi,t)\partial_{\xi} \Delta W (\xi - x(t)))d\xi.
\end{equation}

Let $P(\xi, t) = P_1(\xi,t)$ and $Q(\xi,t) = P_2(\xi + x(t),t)$. Then, \eqref{eq: original full two layers P1 P2} becomes 
\begin{align}\label{eq: 2 layers P Q}
\left\{\begin{array}{ll}
    &\displaystyle   \frac{\partial P}{\partial t}(\xi,t) + v(t) \frac{\partial P}{\partial \xi}(\xi,t)  = -a_0(\Omega) P(\xi,t) + \frac{\omega_B(\xi)}{\ell},\\
   &\displaystyle \frac{\partial Q}{\partial t}(\xi,t) - v(t) \frac{\partial Q}{\partial \xi}(\xi,t)  = -a_0(\Omega) Q(\xi,t) + \frac{\omega_B(\xi)}{\ell}.
    \end{array}
    \right.
\end{align}
The periodicity of $P_1$ and $P_2$, enables us to rewrite the motor force as
\begin{equation*}
     f_{mot}(t) = \int\limits_0^{\ell} (P(\xi,t)- Q(\xi,t))\partial_{\xi} \Delta W (\xi)d\xi \,,
\end{equation*}
and obtain, from \eqref{eq: force balance},
\begin{equation}\label{eq: 2 layers v}
    v(t) = \dot x(t) = \displaystyle \frac{1}{2\eta} \left( \int\limits_0^{\ell} (P(\xi,t)- Q(\xi,t))\partial_{\xi} \Delta W (\xi)d\xi - 2kx(t) \right).
\end{equation}

Finally, we obtain the system of equations 

\begin{align} \label{eq: 2 layers full PDE}
    \left\{\begin{array}{lll}
    &\displaystyle   \frac{\partial P}{\partial t}(\xi,t) + v(t) \frac{\partial P}{\partial \xi}(\xi, t)  = -( \omega_A(\xi) +\omega_B(\xi)) P(\xi, t) + \frac{\omega_B(\xi)}{\ell},\\[10pt]
   &\displaystyle \frac{\partial Q}{\partial t}(\xi,t) - v(t) \frac{\partial Q}{\partial \xi}(\xi,t)  = -( \omega_A(\xi) +\omega_B(\xi)) Q(\xi,t) + \frac{\omega_B(\xi)}{\ell}, \\
    &   v(t) = \displaystyle \frac{1}{2\eta} \left( \int\limits_0^{\ell} (P(\xi,t)- Q(\xi,t))\partial_{\xi} \Delta W (\xi)d\xi - 2kx(t) \right),\\[10pt]
    & P(0,t) = P(\ell,t), \quad Q(0,t) = Q(\ell,t).
    \end{array}
    \right.
\end{align}
The equilibrium state for this system is, 
\begin{equation*}
     \left\{
    \begin{array}{l}
      v_{eq} = 0,\\
      x_{eq} = 0,\\
      P_{eq} = Q_{eq} = \displaystyle \frac{\omega_B}{\ell (\omega_A + \omega_B)}.
    \end{array}
  \right.
\end{equation*}
Notice that while the probability equilibrium is the same as in the one-row model, the equilibrium position is now zero.
This system will then be studied both theoretically and numerically in the next sections, assuming suitable initial conditions for $P$ and $Q$.

\corr{\subsection{The $N$-row model}\label{sec: Nlayers}}

Starting from two rows of molecular motors, we extend the model from section \ref{sec: N=2} to $N \geq 2$ rows of molecular motors. \\
As illustrated in figure \ref{fig:simple_axoneme} The system is then composed of $N+1$ microtubule doublets on the outside, arranged in a circle, and no central pair. All filaments have the same polarity, meaning that between two microtubule pairs, motors move towards the base. 
\begin{figure}[!ht]
\centering
\includegraphics[width=0.5\linewidth]{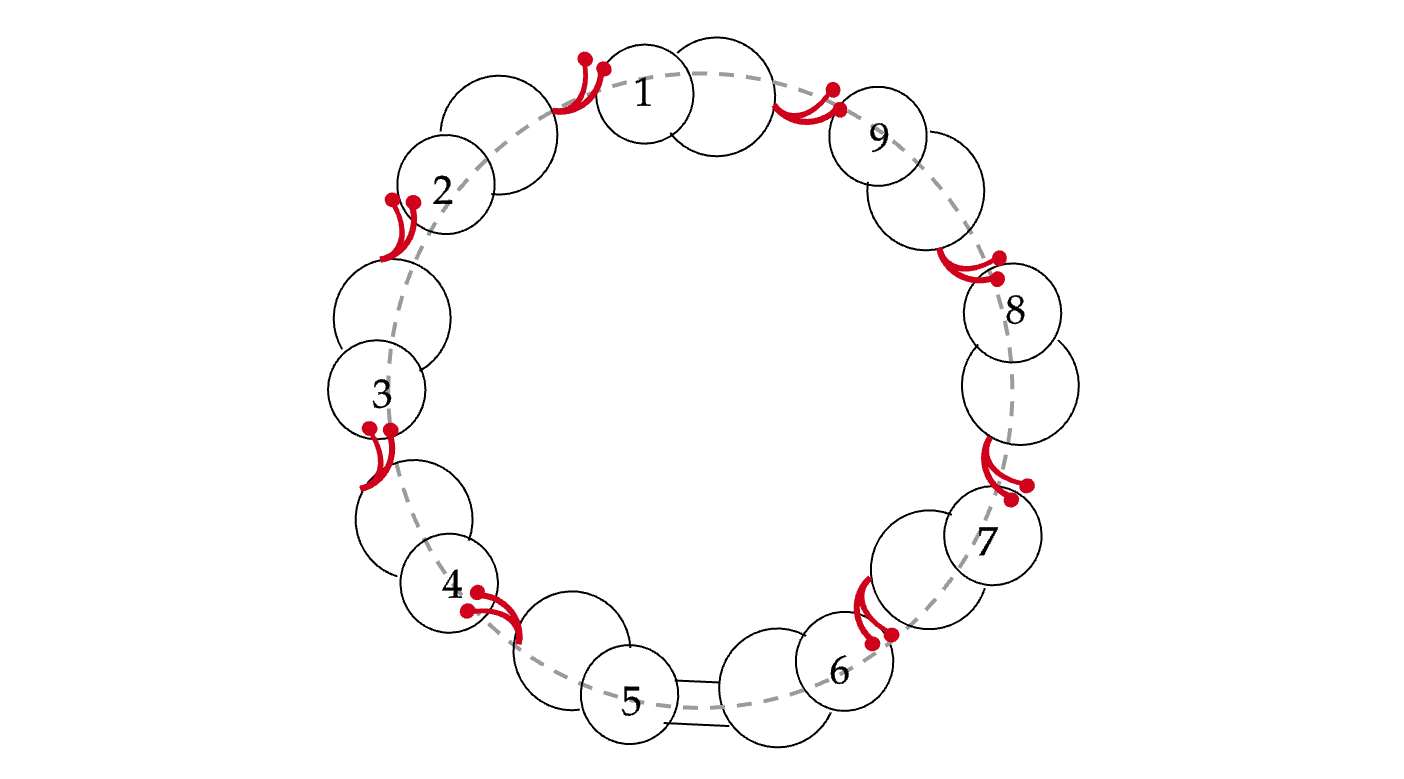}

  \caption {Simplified axoneme with $9$ microtubule doublets and no central pair \cite{Yagi1995}.}
        \label{fig:simple_axoneme}
    \end{figure}
As before, filament pairs $0$ and $N$ are considered to be the same pair in order to take into account the cylindrical periodic structure. They have no shift between each other, as shown in the split view Figure \ref{fig:Ntubshift}.
Moreover, all microtubule pairs are assumed inextensible, rigid and at a constant distance from each other. They are held together by elastic and viscous elements that do not permit infinite sliding between them.
We look at molecular motors in a periodicity cell of an $N$-axoneme, of length $\ell$.
\begin{figure}[ht!]
    \centering
    \includegraphics[width= 0.7 \textwidth]{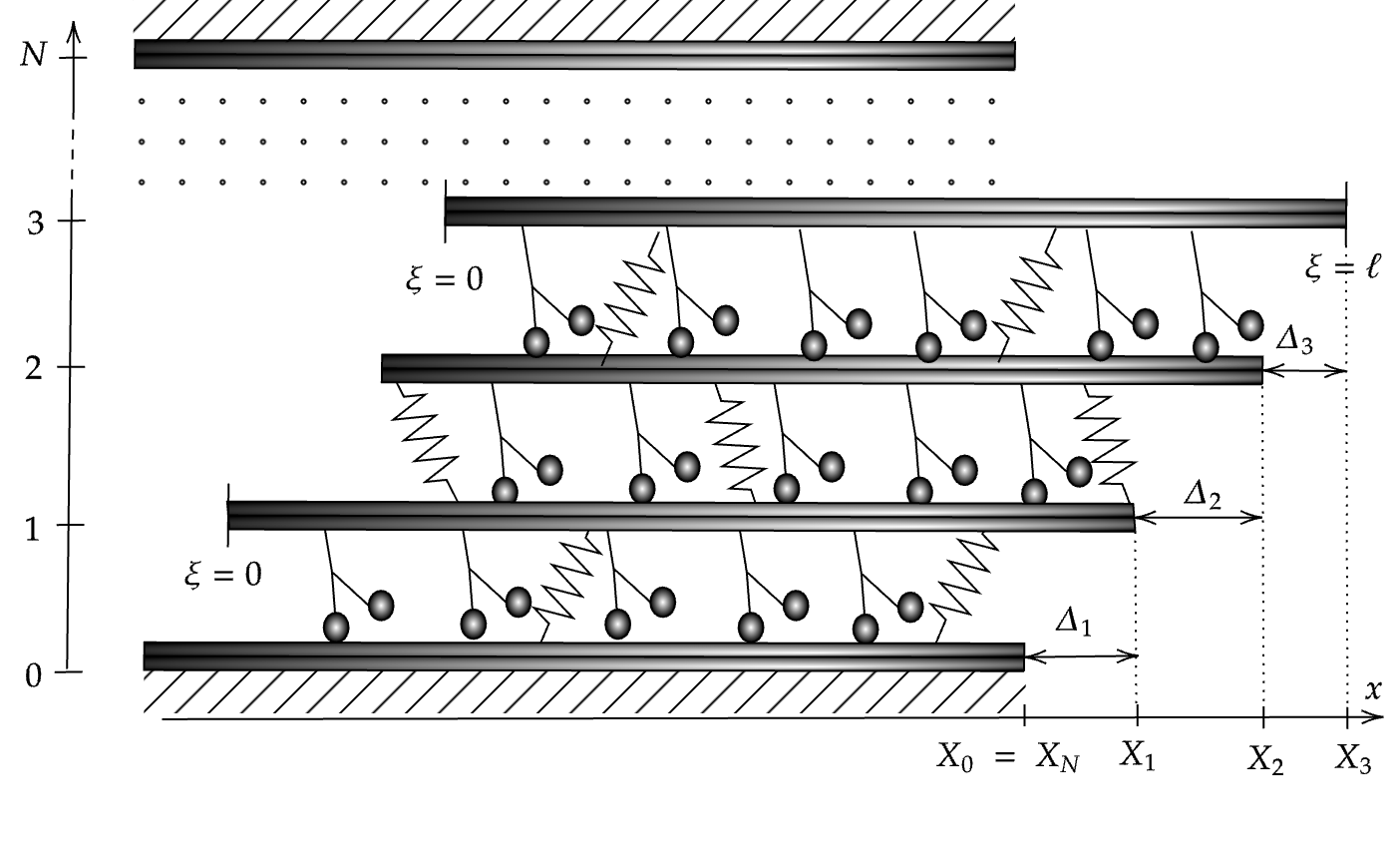}
    \caption{Motors and tubule shift in the $N$-row model (here the horizontal axis $x$ shows absolute tubule displacement instead of motor position $\xi$ in a periodicity cell).}
    \label{fig:Ntubshift}
\end{figure}

For $i \in \{ 0, \dots, N \}$, we denote by $x_i$ the horizontal shift of the $i$-th tubule, as shown in Figure \ref{fig:Ntubshift}.
We measure displacement with respect to the $0$-th filament: $ x_0 = x_N = 0$. For $1 \leq i \leq N$, we introduce $\Delta_i = 
x_i - x_{i-1}$, the relative displacement between filaments $i$ and $i-1$. It follows that $\sum_{i=1}^N\Delta_i(t) =0$. The shifting speed of the $i$-th filament is defined by $v_i = \dot{x}_i - \dot{x}_{i-1} = \dot{\Delta}_i$, and we have
\begin{equation}\label{modA: dotDelta=0}
   \sum\limits_{i=1}^N v_i = \sum_{i=1}^N\dot{\Delta}_i(t)=0.
\end{equation}

Once again, the variable $\xi \in [0, \ell]$ represents the local variable along one pair, as in the two-row model. We denote by $P_i(\xi, t) =\tilde{P}_{i,A}(\xi, t)  $ the density of molecular motors at position $\xi$ and time $t$ attached to the $i$-th filament and who are in state $A$, walking on the $(i-1)$-th filament. 
Following again section \ref{sec: N=2}, for $j \in \{ 1, \dots, N \}$, $t>0$ and $\xi \in [0, \ell]$, we define $Q_j$ as $Q_j(\xi,t) = P_j(\xi + \Delta_j(t),t)$
and we finally obtain the system at any $t>0$ and  $\xi \in [0, \ell]$
\begin{equation}
\left \{
  \begin{aligned}
\partial_t Q_j + v_j \partial_{\xi} Q_j = -(\omega_A + \omega_B) Q_j + \omega_B/\ell, && \text{for $j \in \{ 1, \dots, N \}$,}\\
   \eta (v_ i-v_{i+1}) = \displaystyle\int\limits_0^{\ell} (Q_i - Q_{i+1}) \partial_{\xi}\Delta W - k(\Delta_i - \Delta_{i+1}), && \text{for $i \in \{ 1, \dots, N-1 \}$},\\
   \sum_{i=1}^N\dot{\Delta}_i(t)=0  \text{ and } \dot x_N = 0,
  \end{aligned} \right.
  \label{eq: Nlayers} 
\end{equation}
where the first $N$ equations are transport equations for the motors probability density and the $N-1$ following equations are the force balances on the $i$-th filament with the motors and springs around it, for $i=1\ldots N-1$.

Like in previous models, the equilibrium state is, for all $1 \leq i \leq N-1$ and $1 \leq j \leq N$
\begin{equation*}
     \left\{
    \begin{array}{l}
      v_{eq,i} = 0,\\
      x_{eq,i} = 0,\\
      Q_{eq,j} = \displaystyle \frac{\omega_B}{\ell (\omega_A + \omega_B)}.
    \end{array}
  \right.
\end{equation*}
Notice that again, the probability equilibrium is the same as in the previous models and, as in the two-row setting, the equilibrium position is zero.

\section{Theoretical results}
\corr{In this section we state and prove two theorems, giving the existence and uniqueness of solutions  and existence of a Hopf bifurcation for two-row model \eqref{eq: 2 layers full PDE}. We then give some theoretical consideration on the general $N$-row model \eqref{eq: Nlayers}.}

Let us define $C_{\#}^1([0,\ell])$ as the set of restrictions to $[0,\ell]$ of $C^1$ and $\ell$-periodic functions over $\mathbb{R}$, then the first theorem reads as follows:
\begin{theorem}[Existence and uniqueness] \label{th: Ex and Uniqueness}
Let us fix $\ell>0$. Assume $\omega_A(\xi;\Omega)$ and $\omega_B(\xi;\Omega)$ as in \eqref{eq: uniformity om1+om2}. Moreover, assume $\omega_B$ and $\Delta W$ to be at least $C_{\#}^1([0,\ell])$.
If the initial data $P(\xi, 0)$ and $Q(\xi, 0)$ are $C_{\#}^1([0,\ell])$ and $x(0)=0$, then the system of equations \eqref{eq: 2 layers full PDE}, admits a unique solution $P, \, Q \in C^1([0,\ell]\times \mathbb{R}_+)$, with $\xi \mapsto P(\xi,\cdot)$ and $\xi \mapsto Q(\xi,\cdot)$ in $C_{\#}^1([0,\ell])$, and $x \in C^1(\mathbb{R}_+)$.
\end{theorem}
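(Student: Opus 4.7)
My plan is to use a Picard-type fixed-point argument on the velocity $v(t)$, reducing the PDE system to a scalar Volterra equation. The key observation is that once $v\in C([0,T])$ is frozen, the two transport equations in \eqref{eq: 2 layers full PDE} decouple and can be integrated explicitly by the method of characteristics (the coefficient $a_0(\Omega)=\omega_A+\omega_B$ being constant in $\xi$ by \eqref{eq: uniformity om1+om2}, which makes the Duhamel formula particularly clean). The force balance together with $\dot x = v$ then closes the loop.

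\textbf{Step 1 (representation by characteristics).} For $v\in C([0,T])$ set $X(t):=\int_0^t v(s)\,ds$, so that $x(t)=X(t)$. The $P$-characteristics are the straight lines $s\mapsto \xi - X(t)+X(s)$, and Duhamel gives
\[
P(\xi,t) = e^{-a_0 t} P_0(\xi-X(t)) + \frac{1}{\ell}\int_0^t e^{-a_0(t-s)}\omega_B(\xi-X(t)+X(s))\,ds,
\]
with the analogous formula for $Q$ involving $+X(t)-X(s)$ instead. Periodicity in $\xi$ is preserved since $P_0$ and $\omega_B$ are $\ell$-periodic.

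\textbf{Step 2 (contraction on small time).} Plugging these representations into the third equation of \eqref{eq: 2 layers full PDE} yields an integral equation $v=\mathcal{T}[v]$. Denoting by $\mathcal{T}[v_1],\mathcal{T}[v_2]$ the images of two velocities, the differences $P_1-P_2$ and $Q_1-Q_2$ can be estimated in $L^\infty([0,\ell])$ by $\|v_1-v_2\|_{C([0,T])}$ times a constant depending on $\|\omega_B'\|_\infty$, $\|P_0'\|_\infty$, $\|Q_0'\|_\infty$, and $T$; the contribution of the $-2kx$ term is $O(T^2)$. Here the $C^1$ regularity of $\omega_B$ and of the initial data is crucial, and this is the step I expect to be the main technical obstacle, since the nonlinearity enters through $\omega_B(\xi-X(t)+X(s))$ and so one must control how characteristics depend on $v$. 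For $T>0$ small enough one obtains a strict contraction on a closed ball of $C([0,T])$ and Banach's theorem yields a unique local solution.

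\textbf{Step 3 (global a priori bounds).} To extend to $\mathbb{R}_+$ I will use that the Duhamel representation immediately gives
\[
\min\!\Bigl(\inf P_0,\tfrac{\inf \omega_B}{\ell\,a_0}\Bigr) \le P(\xi,t) \le \max\!\Bigl(\sup P_0,\tfrac{\sup \omega_B}{\ell\,a_0}\Bigr),
\]
uniformly in $t$, and the same for $Q$. Consequently $F(t):=\int_0^\ell(P-Q)\,\partial_\xi\Delta W\,d\xi$ is uniformly bounded. Since $x$ solves the linear ODE $\dot x + (k/\eta)x = F(t)/(2\eta)$ with $x(0)=0$, it too is uniformly bounded, and so is $v$. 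The local existence time depends only on these bounds, so the solution extends to $\mathbb{R}_+$ by a standard continuation argument, and uniqueness on each bounded interval follows from the uniqueness of the fixed point.

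\textbf{Step 4 ($C^1$ regularity).} Once $v\in C([0,\infty))$ is known, differentiation of the Duhamel formula (permissible thanks to $P_0,\omega_B\in C^1_{\#}([0,\ell])$) shows that $\xi\mapsto P(\xi,t)$ is in $C^1_{\#}([0,\ell])$ with $\partial_\xi P$ jointly continuous in $(\xi,t)$; the same for $Q$. Then $v(t)$ is $C^1$ in $t$ via its defining formula and $\dot x=v$, and finally $\partial_t P = -v\,\partial_\xi P - a_0 P + \omega_B/\ell$ is continuous, giving $P\in C^1([0,\ell]\times\mathbb{R}_+)$. This completes the proof.
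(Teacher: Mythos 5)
Your proposal follows essentially the same route as the paper: Duhamel/characteristics representation of $P$ and $Q$ for a frozen velocity, a Banach fixed-point argument on a small time interval (the paper contracts on $x(\cdot)$ via $\psi[x](t)=\int_0^t F[x](s)\,ds$ rather than on $v$, a cosmetic difference), continuation to $\mathbb{R}_+$ via uniform bounds from the integral formula, and a bootstrap for $C^1$ regularity. One point in Step 3 is stated too loosely, though. Your Step 2 Lipschitz constant depends on $\Vert P_0'\Vert_\infty$ and $\Vert Q_0'\Vert_\infty$, so the local existence time at a restart depends on the $C^1_\xi$ norm of the data at that time, not merely on the $L^\infty$ bounds you establish in Step 3; the maximum-principle bound on $P,Q$ alone does not prevent the restart interval from shrinking to zero if $\partial_\xi P$ were to grow. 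The missing ingredient is the uniform-in-time bound
\begin{equation*}
\Vert P(\cdot,t)\Vert_{C^1_\xi}\le \max\Bigl(\Vert P_0\Vert_{C^1_\xi},\tfrac{1}{a_0\ell}\Vert\omega_B\Vert_{C^1_\xi}\Bigr),
\end{equation*}
obtained by differentiating the Duhamel formula in $\xi$ — exactly what the paper does in its Step 2 — which you in effect already have available since you differentiate that formula in your Step 4. With that one line added, the continuation argument closes. (A small slip: if you contract on $v$, the $-2kx$ term contributes $O(T)\Vert v_1-v_2\Vert$, not $O(T^2)$; this does not affect the conclusion.)
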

For the second result we chose $\Delta W (\xi) = U \cos \left(\displaystyle \frac{2 \pi \xi }{\ell}\right)$, as in \cite{camalet2000generic}. Moreover, since the two motor heads are identical, then $\omega_B(\xi) = \omega_A(\xi + \ell/2)$. Taking the transition rates' periodicity into account, we use the following Fourier expansion
\begin{equation} \label{eq: om2, om1 general choices}
\omega_B(\xi; \Omega) = \displaystyle \frac{a_0(\Omega)}{2} + \sum\limits_{n = 2k + 1,\, k \geq 0} \left( a_n(\Omega) \cos{\frac{2n \pi \xi}{\ell}} + b_n(\Omega)\sin{\frac{2n \pi \xi}{\ell}}\right),
\end{equation}
with $a_n(\Omega)$ and $b_n(\Omega)$ real coefficients. The even coefficient disappear due to the fact that $\omega_B(\xi) = \omega_A(\xi + \ell/2)$ and $\omega_A(\xi) + \omega_B(\xi) = a_0$.
\begin{theorem}[Hopf bifurcation]\label{th: Hopf bifurcation} \corrThm{In addition to the hypothesis above, assume that $\Omega \geq 0$, $a_0(\Omega)>0$, $a_1(\Omega) \neq 0$ and that $a_i(\Omega)$ with $i=0,1$ are at least $C^1(\mathbb{R}_+)$. Furthermore, let us define 
\begin{equation}    \label{eq: tau}
    \tau (\Omega) := -\frac{1}{4}\left(2a_0(\Omega)+ \frac{\zeta  \ell}{\pi} +  2 \lambda \frac{a_1(\Omega)}{a_0(\Omega) \ell}\right),
\end{equation}
with $\zeta = 2 \pi k / \eta \ell$ and $\lambda = 2 \pi^2 U / \eta \ell$.}
Suppose there exists $\Omega_0 \in \mathbb{R}_+$ such that $\tau(\Omega_0)=0$ and $\tau'(\Omega_0)>0$, then the solutions $P(\xi,t;\Omega)$, $Q(\xi,t;\Omega)$ and $x(t;\Omega)$ of the system \eqref{eq: 2 layers full PDE} show a super-critical Hopf bifurcation in time near the bifurcation value $\Omega_0$ and near the fixed point 
    \begin{equation*}
        \left(P_{eq}(\xi;\Omega),Q_{eq}(\xi;\Omega),x_{eq}(\Omega)\right) = \left(\frac{\omega_B(\xi;\Omega)}{a_0(\Omega)\ell},\frac{\omega_B(\xi;\Omega)}{a_0(\Omega)\ell},0\right).
    \end{equation*}
    In particular, the fixed point is asymptotically stable for $\Omega < \Omega_0$, and unstable for $\Omega > \Omega_0$. Moreover there exists an asymptotically stable periodic orbit of frequency $\omega_0$ given by 
    \corrThm{and frequency
\begin{equation} \label{th:freq}
\omega_0 = \sqrt{\frac{\zeta  \ell a_0(\Omega_0)}{2 \pi}}
\end{equation}}
for $\Omega > \Omega_0$. In that case, the displacement $x(t)$ has an amplitude given by 
\begin{equation} \label{eq: th amplitude Hopf}
\rho = \sqrt{-\frac{(\Omega-\Omega_0)\tau'(\Omega_0)}{\tilde{\tau}}} +o\left(\sqrt{\Omega-\Omega_0}\right),
\end{equation}
where $\displaystyle \tilde{\tau} = - \frac{3 \pi \zeta}{4 \ell} \left(\frac{\pi a_0(\Omega_0) + \ell \zeta}{\pi a_0(\Omega_0) + 2 \ell \zeta}\right)$. 
\end{theorem}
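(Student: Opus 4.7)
The plan is to recast \eqref{eq: 2 layers full PDE} as a semilinear evolution in a Banach space of periodic functions, reduce to a two-dimensional centre manifold at $\Omega=\Omega_0$, and then read off super-criticality and amplitude from the normal form. I proceed in four steps.

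\emph{Step 1: rewriting around the equilibrium and exposing the symmetry.} Let $p=P-P_{eq}$, $q=Q-Q_{eq}$, and introduce the symmetric/antisymmetric combinations $r=p-q$ and $s=p+q$. Since $P_{eq}=Q_{eq}=:E$, subtracting and adding the two transport equations gives
\begin{align*}
\partial_t r &= -a_0\, r - 2v\,\partial_\xi E - v\,\partial_\xi s,\\
\partial_t s &= -a_0\, s - v\,\partial_\xi r,\\
\dot x &= v = \frac{1}{2\eta}\Bigl(\int_0^\ell r\,\partial_\xi\Delta W\,d\xi - 2kx\Bigr),
\end{align*}
on the Banach space $X=C^1_\#([0,\ell])\times C^1_\#([0,\ell])\times\mathbb R$. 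The crucial observation is that $s$ enters only through the quadratic term $v\partial_\xi s$ and that $v$ depends on $r$ only through $\int r\,\partial_\xi\Delta W$; hence at the linear level $s$ is a slave variable decaying at rate $-a_0$, and the linearised dynamics act nontrivially only on $(r,x)$.

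\emph{Step 2: spectrum of the linearised operator $L(\Omega)$.} Because $\partial_\xi\Delta W=-\tfrac{2\pi U}{\ell}\sin(2\pi\xi/\ell)$, only the first Fourier mode of $r$ enters $v$; the remaining modes and the whole variable $s$ satisfy $\dot z=-a_0 z$ plus driving by $v$, so they contribute only the eigenvalue $-a_0$ in the essential spectrum. Projecting the spectral equation $L(\Omega)(r,x)=\mu(r,x)$ onto the $n=1$ mode and eliminating $r$, one obtains the characteristic equation
\begin{equation*}
\mu^2 + \Bigl(a_0 + \frac{\lambda\, a_1}{a_0\,\ell} + \frac{\zeta\,\ell}{2\pi}\Bigr)\mu + \frac{a_0\,\zeta\,\ell}{2\pi} = 0,
\end{equation*}
whose trace equals $2\tau(\Omega)$ and whose product of roots is $a_0\zeta\ell/(2\pi)$. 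At $\Omega=\Omega_0$ one has $\tau=0$, so the two roots are $\pm i\omega_0$ with $\omega_0=\sqrt{a_0(\Omega_0)\zeta\ell/(2\pi)}$, giving \eqref{th:freq}. All other spectrum lies in the half-plane $\mathrm{Re}\,\mu\le -a_0<0$. Transversality follows immediately from the Implicit Function Theorem applied to the quadratic, since $\frac{d}{d\Omega}\mathrm{Re}\,\mu\bigr|_{\Omega_0}=\tau'(\Omega_0)>0$.

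\emph{Step 3: centre manifold reduction.} Apply the centre manifold theorem for parameter-dependent semilinear evolutions on $X$ (as in Haragus--Iooss) using the spectral gap from Step~2 and the smoothness of the nonlinearity (which is polynomial in its arguments once $\omega_B,\Delta W\in C^1_\#$). This produces, for $\Omega$ in a neighbourhood of $\Omega_0$, a two-dimensional invariant manifold $M_\Omega\subset X$ tangent to the centre eigenspace $E^c$ spanned by the real and imaginary parts of the eigenvector associated with $i\omega_0$; that eigenvector lives in the $(r_1^s,r_1^c,x)$-subspace, with $s=0$ and $r_n^{\,c,s}=0$ for $n\neq 1$. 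The reduced flow on $M_\Omega$ is a smooth ODE in $\mathbb R^2$, and the claim of the theorem follows from the planar Hopf theorem applied to this reduction, provided I compute the sign of the cubic normal form coefficient.

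\emph{Step 4: normal form and amplitude.} The only quadratic nonlinearities in the system are $-v\partial_\xi s$ (in the $r$-equation) and $-v\partial_\xi r$ (in the $s$-equation); the latter forces a nonzero quadratic correction $s=\Psi_2(r_1^s,r_1^c,x)$ on $M_\Omega$, obtained by solving $(-a_0\mathrm I -\partial_t)s=v\partial_\xi r$ on $E^c$ by Fredholm inversion (the operator $-a_0\mathrm I$ is invertible with norm $1/a_0$ on the stable complement). Substituting this $\Psi_2$ back into the $r$-equation produces the cubic term $v\,\partial_\xi\Psi_2$, which is the only source of cubic nonlinearity on $M_\Omega$. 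Projecting onto the centre eigenspace in polar coordinates $(\rho,\theta)$ and retaining terms up to order three yields the amplitude equation
\begin{equation*}
\dot\rho = \tau(\Omega)\,\rho + \tilde\tau\,\rho^3 + o(\rho^3),
\end{equation*}
with $\tilde\tau$ computed from the projection above; identifying coefficients reproduces
$\tilde\tau=-\tfrac{3\pi\zeta}{4\ell}\bigl(\pi a_0(\Omega_0)+\ell\zeta\bigr)/\bigl(\pi a_0(\Omega_0)+2\ell\zeta\bigr)<0$. Since $\tilde\tau<0$, the bifurcation is super-critical, the branch is asymptotically stable, and Taylor-expanding $\tau(\Omega)=\tau'(\Omega_0)(\Omega-\Omega_0)+o(\Omega-\Omega_0)$ and solving $\dot\rho=0$ gives exactly \eqref{eq: th amplitude Hopf}.

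The hard part will be \emph{Step 4}: the explicit Fredholm inversion producing $\Psi_2$, followed by the projection of $v\,\partial_\xi\Psi_2$ onto the centre eigenspace, is a lengthy algebraic computation in Fourier coefficients, requiring careful bookkeeping of the cross-coupling between the $\cos(2\pi\xi/\ell)$ and $\sin(2\pi\xi/\ell)$ modes of $r$ and of the $n=0,2$ modes that $\Psi_2$ generates in $s$. Steps 1--3 are comparatively routine once the centre manifold framework is in place.
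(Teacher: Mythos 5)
Your Steps 1--2 are sound and in fact reproduce the paper's linear analysis in different clothing: the symmetric/antisymmetric split $r=p-q$, $s=p+q$ is exactly the change of variables $y=\tfrac12(\delta p_1^s-\delta q_1^s)$, $z=\tfrac12(\delta p_1^s+\delta q_1^s)$ used in the paper after Fourier expansion, and your characteristic polynomial (trace $=2\tau$, determinant $=a_0\zeta\ell/2\pi$), frequency and transversality computations are correct. The gaps are in Steps 3--4.

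First, Step 3 invokes a centre manifold theorem for \emph{semilinear} evolutions on $X=C^1_\#\times C^1_\#\times\mathbb R$, but the system is quasilinear: the nonlinearities $v\,\partial_\xi r$ and $v\,\partial_\xi s$ lose one derivative, mapping $X$ into $C^0\times C^0\times\mathbb R$ only, and the linear operator ($-a_0\mathrm{I}$ plus a rank-one perturbation) is bounded and provides no smoothing to recover it. So the Haragus--Iooss framework does not apply as stated. The fix is the observation that makes the whole problem tractable and that the paper exploits from the outset: $v(t)$ is a \emph{scalar}, so $v\,\partial_\xi$ preserves each Fourier mode in $\xi$ (it merely rotates the $(\cos,\sin)$ pair at each $n$). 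Consequently the full \emph{nonlinear} dynamics of the $n=1$ coefficients together with $x$ is an exactly closed five-dimensional ODE, and only the finite-dimensional centre manifold theorem (Wiggins) is needed; no infinite-dimensional reduction is required.

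Second, the same structural fact contradicts two assertions in your Step 4: the quadratic term $v\,\partial_\xi r$ does \emph{not} generate $n=0,2$ modes in $s$ (there is no frequency doubling in $\xi$, since $v$ does not depend on $\xi$), so $\Psi_2$ lives in the $n=1$ mode; and the centre eigenvector is \emph{not} confined to the $(r_1^c,r_1^s,x)$ subspace in general, because the linear driving term $-2v\,\partial_\xi E$ is proportional to $\partial_\xi\omega_B$, which carries all odd harmonics $a_n,b_n$. Finally, because the reduction is genuinely finite-dimensional, the modes $n\neq1$ of the PDE are not covered by the centre manifold argument at all and require a separate slaving argument (the paper's Proposition on higher-order coefficients: driven by the periodic $x(t)$, they converge to $T$-periodic orbits for $n$ odd and to zero for $n$ even) before one can conclude that $P$, $Q$ themselves, and not just $x$, undergo the bifurcation. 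The explicit computation of $\tilde\tau$, which you defer, is the crux of super-criticality and of the amplitude formula \eqref{eq: th amplitude Hopf}, and must be carried out (as in the paper's Appendix) to complete the proof.
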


The methods introduced to prove the theorems can be also used to demonstrate similar results for the one-row model, and might prove useful to consider the general $N$-row structure. \corr{These two results rigorously prove the existence of a solution for system \eqref{eq: 2 layers full PDE} which bifurcates in time for a given value of the ATP concentration. The relative sliding $x(t; \Omega)$ between microtubules shows oscillatory motion when $\Omega>\Omega_0$, depending on the coefficients $a_0, \, a_1$ in \eqref{eq: om2, om1 general choices}. This happens as a result of the binding and unbinding coordinated activity of molecular motors, whose dynamics is represented by $P(\xi,t;\Omega)$ and $Q(\xi,t;\Omega)$. The behavior of the bifurcation will be investigated at the end of the following section in term on the way the coefficients $a_0, \, a_1$ depend on the ATP concentration $\Omega$. Notice that the other Fourier coefficients $a_n$, with $n>1$, and $b_n$, with $n\geq 1$, don't play any role in the onset of bifurcation but influence the probabilities' amplitude of oscillation.}

\subsection{Proof of Theorem \ref{th: Ex and Uniqueness} (existence and uniqueness)}

The proof follows a classical scheme in two steps: we first show local existence of the solution in time, and then extend it to $\mathbb{R}_+$.\\

\noindent \textbf{Step 1: local existence.}
Let $T>0$ be a time to be chosen afterwards and define the map $\psi: C^0([0,T]) \to C^0([0,T])$ as $\psi[x(\cdot)](t) = \displaystyle \int_0^t F[x(\cdot)](s)\,ds$, where 
\begin{equation}
    F[x(\cdot)](s) = \displaystyle \frac{1}{2\eta} \int\limits_0^{\ell} (P_x(\xi,s)- Q_x(\xi,s))\partial_{\xi} \Delta W (\xi)d\xi - \frac{k}{\eta} x(s),
    \label{eq: contraction}
\end{equation}
and the functions $P_x(\xi,t)$ and $Q_x(\xi,t)$ are defined as the solutions of \eqref{eq: 2 layers P Q} and $v(t) = \dot x(t)$. Notice that $P_x$ and $Q_x$ are explicitly given in their integral form by
\begin{equation}\label{eq: integral form for P}
P_{x}(\xi,t)= e^{-a_0 t}P (\xi - x(t)+ x(0), 0) +\displaystyle\frac{e^{-a_0 t}}{\ell}\displaystyle\int_0^t e^{a_0 s}\omega_B(\xi + x(s)-x(t))\,ds,
\end{equation}
and 
\begin{equation}\label{eq: integral form for Q}
Q_{x}(\xi,t)=  e^{-a_0 t}Q (\xi +x(t)-x(0), 0) +\displaystyle\frac{e^{-a_0 t}}{\ell}\displaystyle\int_0^t e^{a_0 s}\omega_B(\xi - x(s)+x(t))\,ds,
\end{equation}
which remain well-defined when $x$ is only in $C^0([0,T])$.

Notice that $(\tilde{x}, P_{\tilde{x}}, Q_{\tilde{x}})$ is a solution of the system (\ref{eq: 2 layers P Q}, \ref{eq: 2 layers v}) if and only if $\tilde{x}(t)$ is a fixed point of $\psi$, i.e. $\psi[\tilde{x}(\cdot)] = \tilde{x}(\cdot)$. In order to proceed, we prove that $\psi: C^0([0,T]) \to C^0([0,T])$ is a strict contraction for $T$ sufficiently small.\\

Let us take two functions $x_1$ and $x_2$ in $C^0([0,T])$, with $x_1(0)=x_2(0)=0$ and with the corresponding $P_{x_1}$, $Q_{x_1}$ and $P_{x_2}$, $Q_{x_2}$ defined 
by the integral formulations (\ref{eq: integral form for P}, \ref{eq: integral form for Q}). The initial conditions are identical $P_{x_1}(\xi, 0) = P_{x_2}(\xi, 0)=P(\xi)$ and $Q_{x_1}(\xi, 0) = Q_{x_2}(\xi, 0)=Q(\xi)$. We want to estimate the quantity 
\begin{equation}
    \psi[x_1(\cdot)](t)-\psi[x_2(\cdot)](t) = \int_0^t F[x_1(\cdot)](s)-F[x_2(\cdot)](s)\,ds.
\end{equation}
Using \eqref{eq: contraction}, we get
\begin{equation}\label{eq: F1-F2}
\begin{split}
        F[x_1(\cdot)](s)&-F[x_2(\cdot)](s) = \displaystyle \frac{1}{2\eta} \Bigg( \int\limits_0^{\ell} \Big(P_{x_1}(\xi, s) - P_{x_2}(\xi, s)  \\
        & - \big( Q_{x_1}(\xi, s) - Q_{x_2}(\xi, s) \big)\Big)\partial_{\xi} \Delta W (\xi)d\xi \Bigg)- \displaystyle\frac{k}{\eta} (x_1(s)-x_2(s)).
\end{split}
\end{equation}
Now, we have from \eqref{eq: integral form for P}
\begin{equation}
\begin{split}
    P_{x_1}(\xi,t) & - P_{x_2}(\xi,t)=  e^{-a_0 t}(P_{x_1} (\xi - x_1(t), 0) - P_{x_2} (\xi - x_2(t), 0))\\
    &+\displaystyle\frac{e^{-a_0 t}}{\ell}\displaystyle\int_0^t e^{a_0 s}(\omega_B(\xi + x_1(s)-x_1(t))-\omega_B(\xi + x_2(s)-x_2(t))\,ds.
\end{split}
\end{equation}
Thus, we have the estimate
\begin{equation*}
    \begin{split}
     \|P_{x_1}(\xi,t)& - P_{x_2}(\xi,t)\|_{L_{\xi}^{\infty}L_t^{\infty}}\\ 
    &\leq \|\partial_{\xi} P\|_{L_{\xi}^{\infty}} \|x_1(t)-x_2(t)\|_{L_t^{\infty}}\\
    &\quad\quad +\frac{1}{\ell}\left\| \displaystyle\int_0^t e^{a_0 (s-t)}\| \partial_{\xi} \omega_B\|_{L_{\xi}^{\infty}} | x_1(s)-x_1(t)- x_2(s)+x_2(t))|\,ds \right\|_{L_t^{\infty}}\\
    &\leq \left( \|\partial_{\xi} P\|_{L_{\xi}^{\infty}} + \frac{2}{a_0 \ell}\| \partial_{\xi} \omega_B\|_{L_{\xi}^{\infty}} \right) \|x_1(t)-x_2(t)\|_{L_t^{\infty}}.
 \end{split}
\end{equation*}
The same goes for the difference $Q_{x_1}(\xi,t) - Q_{x_2}(\xi,t)$, and we deduce
\begin{equation}
    \|P_{x_1}(\xi,t) - P_{x_2}(\xi,t)- Q_{x_1}(\xi,t) + Q_{x_2}(\xi,t)\|_{L_{\xi}^{\infty}L_t^{\infty}} \leq  C_1 \|x_1(t)-x_2(t)\|_{L_t^{\infty}}.
\end{equation}
with $C_1 = 4 \max\left\{\|\partial_{\xi}Q\|_{L_{\xi}^{\infty}},\|\partial_{\xi}P\|_{L_{\xi}^{\infty}},\frac{2}{a_0 \ell}\| \partial_{\xi}\omega_B\|_{L_{\xi}^{\infty}}\right\}$. 

We now deduce from \eqref{eq: F1-F2}
\begin{equation}
    \|F[x_1(\cdot)]-F[x_2(\cdot)]\|_{L_t^{\infty}} \leq \displaystyle \left( \frac{C_1\ell}{2\eta} \|\partial_{\xi} \Delta W\|_{L_{\xi}^{\infty}}+ \frac{k}{\eta}\right) \|x_1(t)-x_2(t)\|_{L_t^{\infty}},
\end{equation}
and obtain
\begin{equation}
    \|\psi[x_1(\cdot)]-\psi[x_2(\cdot)]\|_{L_t^{\infty}} \leq C_2 T \|x_1-x_2\|_{L_t^{\infty}},
\end{equation}
with $C_2 = \frac{C_1 \ell}{2\eta} \|\partial_{\xi}\Delta W\|_{L_{\xi}^{\infty}} +k/\eta$.

Taking $T = \frac{1}{ 2C_2}$, this proves that $\psi$ is a contraction, as claimed. Then, there exists a unique fixed point $\tilde x (\cdot) \in C^0([0,T])$ which satisfies \eqref{eq: 2 layers v}. \\

\noindent \textbf{Step 2: global solutions.}
The previous construction can be extended as long as $P_x$ and $Q_x$ remain bounded in $C^1([0,\ell])$ with respect to $\xi$ and in $L^{\infty}(\mathbb{R}_+)$ with respect to $t$, as shown by the formulas for $C_1$ and $C_2$. But from (\ref{eq: integral form for P}, \ref{eq: integral form for Q}) we have
\begin{equation*}
    \Vert P_x(\cdot, t) \Vert_{C_{\xi}^{1}} \leq e^{-a_0t} \Vert P \Vert_{C_{\xi}^{1}} + (1-e^{-a_0t})\frac{1}{a_0 \ell}\Vert \omega_B\Vert_{C_{\xi}^{1}},
\end{equation*}
which shows that 
\begin{equation*}
\Vert P_x \Vert_{C_{\xi}^{1}, L_{t}^{\infty}} \leq \max \left( \Vert P \Vert_{C_{\xi}^{1}},\frac{1}{a_0 \ell}\Vert \omega_B\Vert_{C_{\xi}^{1}}\right),
\end{equation*}
and the same goes for $Q_x$.
We thus obtain that there is a unique global solution $(x,\, P_x, \, Q_x)$ to (\ref{eq: 2 layers P Q}, \ref{eq: 2 layers v}) for all time $t\geq 0$.

In fact, $x\in C^1(\mathbb{R}_+)$ and $P_x, \, Q_x \in  C_{\#}^1([0,\ell] \times \mathbb{R}_+)$, with $\xi \mapsto P(\xi,\cdot)$ and $\xi \mapsto Q(\xi,\cdot)$ in $C_{\#}^1([0,\ell])$, as shown by the following bootstrap argument:
From equations (\ref{eq: integral form for P}, \ref{eq: integral form for Q}), we know that $P_x$ and $Q_x$ are continuous in time, which enables us to deduce that $F[x(\cdot)] \in C^0(\mathbb{R}_+)$. Therefore, $\psi[x(\cdot)] \in C^1(\mathbb{R}_+)$. But, since $x = \psi[x(\cdot)]$, we deduce that $x \in C^1(\mathbb{R}_+)$. Re-using equations (\ref{eq: integral form for P}, \ref{eq: integral form for Q}), we may then deduce that $P_x$ and $Q_x$ are in fact $C^1$ in time (and they actually have the minimal regularity of the initial conditions $P, \, Q$ and of $\omega_B$).

\subsection{Proof of Theorem \ref{th: Hopf bifurcation} (Hopf bifurcation)}

The functions $\xi \mapsto P(\xi,t)$ and $\xi \mapsto Q(\xi,t)$ are periodic with period $l$, and they can be then expanded in Fourier series
\begin{equation} \label{eq: fourier P and Q}
    \left\{\begin{array}{l}
P(\xi,t) = \displaystyle \frac{p_0(t)}{2} + \sum\limits_{n>0} \left( p^{c}_n(t) \cos{\frac{2n \pi \xi}{\ell}} + p^{s}_n(t)\sin{\frac{2n \pi \xi}{\ell}}\right), \\
Q(\xi,t) = \displaystyle \frac{q_0(t)}{2}+ \sum\limits_{n>0} \left( q^{c}_n (t)\cos{\frac{2n \pi \xi}{\ell}} + q^{s}_n(t)\sin{\frac{2n \pi \xi}{\ell}}\right).
\end{array}\right.
\end{equation}
We insert this expansion and the one for the transition rates \eqref{eq: om2, om1 general choices} into the system \eqref{eq: 2 layers full PDE}. By matching same order terms, we get an infinite number of ordinary differential equations for the coefficients of $P$ and $Q$. Namely, for $n = 0$ we get two decoupled equations for $p_0$ and $q_0$:
\begin{equation} \label{eq: p0,q0 th}
    \dot{p_0}(t) = - a_0(\Omega) p_0(t) + a_0(\Omega)/ 2 \ell,\quad \dot{q_0}(t) = - a_0(\Omega) q_0(t) + a_0(\Omega)/ 2\ell.
\end{equation}
For $n \neq 0$ we obtain:
\begin{equation} \label{eq: general pn qn th}
    \left\{\begin{array}{l}
\dot{p^{c}_n}(t) + \frac{2 \pi n }{\ell} v(t) p^{s}_n(t)= - a_0(\Omega) p^{c}_n(t) + a_n(\Omega)/ \ell\,,\\
\dot{p^{s}_n}(t) - \frac{2 \pi n }{\ell} v(t) p^{c}_n(t)= - a_0(\Omega) p^{s}_n(t) + b_n(\Omega)/ \ell\,,\\
\dot{q^{c}_n}(t) - \frac{2 \pi n }{\ell} v(t) q^{s}_n(t)= - a_0(\Omega) q^{c}_n(t) + a_n(\Omega)/ \ell\,,\\
\dot{q^{s}_n}(t) + \frac{2 \pi n }{\ell} v(t) q^{c}_n(t)= - a_0(\Omega) q^{s}_n(t) + b_n(\Omega)/ \ell\,,
\end{array}\right.
\end{equation}
together with the force balance equation
\begin{equation}\label{eq: force balance expansion}
      2 \eta \dot{x}(t) + 2 k x(t) + \pi U (p_1^{s}-q_1^{s}) = 0\,.
\end{equation}
Note that the coupling between the probabilities evolution and the force equilibrium equation takes place only for the first order coefficients. We are going to prove the existence of a Hopf bifurcation by treating order $n=0$ first, then $n=1$ and lastly $n>1$. Combining all three results will complete the proof. \\

\noindent {\textbf{Step 1.}} Zeroth order coefficients. It is clear that \eqref{eq: p0,q0 th} gives 
    \begin{equation*}
        p_0(t)=e^{-a_0t}\left(p_0(0)- \frac{1}{2l}\right) + \frac{1}{2\ell}\,,\quad q_0(t)=e^{-a_0t}\left(q_0(0)- \frac{1}{2l}\right) + \frac{1}{2\ell}\,,
    \end{equation*}
and both converge to $\frac{1}{2\ell}$ exponentially fast.\\

\noindent {\textbf{Step 2.}} First order coefficients.
We now prove the onset of oscillatory patterns for the first order coefficients $p_1^{c,s}(t)$, $q_1^{c,s}(t)$, and for the position $x(t)$. 

From \eqref{eq: general pn qn th} and \eqref{eq: force balance expansion} we obtain a first-order five-dimensional ODE:
\begin{equation} \label{eq: ODE p1 q1 th general}
    \left\{\begin{array}{l}
\displaystyle\dot{p^{c}_1} = - a_0(\Omega) p_1^c +   \big(\zeta x +   \displaystyle\frac{\lambda }{2} (p_1^{s}-q_1^{s})\big)p^{s}_1 + \frac{a_1(\Omega)}{\ell},\\[5pt]
\displaystyle\dot{p^{s}_1} = - a_0(\Omega) p_1^s - \big(\zeta  x +   \frac{\lambda }{2} (p_1^{s}-q_1^{s})\big)\displaystyle p^{c}_1  + \frac{b_1(\Omega)}{\ell},\\[5pt]
\dot{q^{c}_1}=  - a_0(\Omega) q_1^c - \big(\zeta x +   \displaystyle\frac{\lambda }{2} (p_1^{s}-q_1^{s})\big)\displaystyle q^{s}_1  + \frac{a_1(\Omega)}{\ell},\\[5pt]
\displaystyle\dot{q^{s}_1} = - a_0(\Omega) q_1^s + \big(\zeta  x +   \displaystyle\frac{\lambda }{2} (p_1^{s}-q_1^{s})\big) q^{c}_1  + \frac{b_1(\Omega)}{\ell},\\[5pt]
\dot{x} = -\displaystyle\frac{\ell}{2 \pi}\big(   \zeta  x +   \displaystyle\frac{\lambda }{2} (p_1^{s}-q_1^{s})\big)\,,
\end{array}\right.
\end{equation}
where $ \zeta = 2 \pi k / \eta \ell$ and $\lambda = 2 \pi^2 U / \eta \ell$.

We linearize the system around its equilibrium point $p_{eq}(\Omega) = (p_{1,\text{eq}}^{c}(\Omega),p_{1,\text{eq}}^{s}(\Omega),q_{1,\text{eq}}^{c}(\Omega),q_{1,\text{eq}}^{s}(\Omega), x_{\text{eq}}(\Omega))$, where
\begin{equation*}
 p_{1,eq}^c = q_{1,eq}^c = \frac{a_1(\Omega) }{ a_0(\Omega) \ell}\,,\,\,\, p_{1,eq}^s = q_{1,eq}^s = \frac{b_1(\Omega) }{ a_0(\Omega) \ell}\,,\,\,\, x_{eq}=0\,.
\end{equation*}
We observe that the Jacobian matrix has five eigenvalues: three of them are equal to 
$-a_0(\Omega)<0$ while the other two are of the form $\tau(\Omega) \pm \sqrt{-\frac{\zeta  \ell a_0(\Omega )}{2 \pi}+ \tau^2(\Omega)},$ where $\tau(\Omega)$ is given by \eqref{eq: tau}.
Since, by hypothesis, there exists a real and positive $\Omega=\Omega_0$ such that $\tau(\Omega_0)=0$, then $-  \zeta  \ell a_0(\Omega_0) +  2 \pi \,\tau^2(\Omega_0)<0$, and we deduce that the pair of complex eigenvalues can be written as $\tau(\Omega) \pm i \omega(\Omega)$ in a neighborhood of $\Omega_0$, where  
\begin{equation}
    \omega(\Omega) := \sqrt{\frac{\zeta  \ell a_0(\Omega )}{2 \pi}-  \tau^2(\Omega)},
    \label{eq: imaginary_smallomega}
\end{equation}
and they cross the imaginary axis at $\Omega=\Omega_0$.

In the following proposition, we are going to study the non-linear behavior of the vector field solution to \eqref{eq: ODE p1 q1 th general} by exploiting the center manifold theorem; the orbit structure near the fixed point and $\Omega_0$ is determined by the restriction of the non linear system to the center manifold. In particular, system \eqref{eq: ODE p1 q1 th general} restricted to the center manifold will show a super-critical Hopf bifurcation near $p_{eq}(\Omega_0)$ and $\Omega_0$.

\begin{proposition}[First order coefficients] \label{prop: first order coeff}
    With the hypothesis of Theorem \ref{th: Hopf bifurcation}, the non-linear system \eqref{eq: ODE p1 q1 th general} has a supercritical Hopf bifurcation near $(p_{eq}(\Omega_0),\Omega_0)$.
\end{proposition}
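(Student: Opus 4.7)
The strategy is the classical Center Manifold + Poincaré normal form reduction for an Andronov--Hopf bifurcation. First I would shift the equilibrium $(p_{eq}(\Omega),0)$ to the origin by setting $\tilde p_1^{c,s}=p_1^{c,s}-p_{1,eq}^{c,s}$, $\tilde q_1^{c,s}=q_1^{c,s}-q_{1,eq}^{c,s}$, and rewrite \eqref{eq: ODE p1 q1 th general} as $\dot y = A(\Omega) y + F(y)$ with $F$ a \emph{purely quadratic} vector field coming from the products $\zeta x\cdot p_1^s$ and $\tfrac{\lambda}{2}(\tilde p_1^s-\tilde q_1^s)\cdot p_1^{c,s}$ (and the analogous $q$-terms), since the factor $p_1^s-q_1^s$ vanishes at equilibrium. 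Passing to symmetric/antisymmetric coordinates $(\tilde p_1^{c,s}\pm\tilde q_1^{c,s},x)$ decouples at the linear level a 3-dimensional active block carrying both critical eigenvalues $\pm i\omega_0$ and one extra stable eigenvalue $-a_0(\Omega_0)$, from a 2-dimensional block with a double stable eigenvalue $-a_0(\Omega_0)$. This dramatically reduces the bookkeeping later on.

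Since $A(\Omega_0)$ has a 2-dimensional central eigenspace and a 3-dimensional strictly stable eigenspace, the Center Manifold Theorem yields a locally invariant $C^k$ two-dimensional manifold $W^c_\Omega$, depending smoothly on the parameter $\Omega$ near $\Omega_0$, tangent at the origin to the eigenspace of $\pm i\omega_0$. Choose a right eigenvector $v\in\mathbb{C}^5$ of $A(\Omega_0)$ for $i\omega_0$ and a left eigenvector $w$ normalized by $\langle w,v\rangle=1$; points of $W^c_\Omega$ are parametrized as $y=zv+\bar z\bar v+h(z,\bar z,\Omega)$ with $h=O(|z|^2)$, whose quadratic coefficients are obtained by inverting $A(\Omega_0)-2i\omega_0 I$ and $A(\Omega_0)$ on the vectors $F(v,v)$ and $F(v,\bar v)$. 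The reduced flow on $W^c_\Omega$ then reads
\begin{equation*}
\dot z=(\tau(\Omega)+i\omega(\Omega))z+\sum_{2\le j+k\le 3}\frac{g_{jk}(\Omega)}{j!\,k!}\,z^j\bar z^k+O(|z|^4),
\end{equation*}
with $g_{20},g_{11},g_{02}$ read off from projecting $F$ on $w$ and $g_{21}$ combining these with the quadratic part of $h$.

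A standard near-identity change of variables then brings this ODE to the Poincaré normal form $\dot z=(\tau(\Omega)+i\omega(\Omega))z+c_1(\Omega)z|z|^2+O(|z|^5)$, with
\begin{equation*}
c_1(\Omega_0)=\frac{i}{2\omega_0}\!\left(g_{20}g_{11}-2|g_{11}|^2-\tfrac{1}{3}|g_{02}|^2\right)+\frac{g_{21}}{2}.
\end{equation*}
The transversality condition $\tau'(\Omega_0)>0$ is part of the hypothesis, so the Andronov--Hopf theorem guarantees the birth of a unique small-amplitude periodic orbit; supercriticality and stability follow from showing $\operatorname{Re}c_1(\Omega_0)<0$, after which the classical amplitude expansion $\rho\sim\sqrt{-\tau'(\Omega_0)(\Omega-\Omega_0)/\operatorname{Re}c_1(\Omega_0)}$ recovers exactly \eqref{eq: th amplitude Hopf} with $\tilde\tau=\operatorname{Re}c_1(\Omega_0)$.

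The main technical obstacle is the explicit evaluation of $c_1(\Omega_0)$, and in particular of the cubic coefficient $g_{21}$: one has to diagonalize the $5\times 5$ matrix $A(\Omega_0)$ in closed form to obtain $v$ and $w$, solve two linear systems for the quadratic coefficients of the center manifold, and then carry out the final algebraic combination. This is manageable because $F$ is bilinear with a very rigid single-factor structure --- the scalar quantity $\zeta x+\tfrac{\lambda}{2}(p_1^s-q_1^s)$ multiplying a purely probability-like vector --- and because the symmetric/antisymmetric decomposition reduces everything essentially to a $3\times 3$ problem. The outcome is the explicit expression for $\tilde\tau$ given in the statement of Theorem~\ref{th: Hopf bifurcation}, which is manifestly negative and thus closes the supercriticality argument.
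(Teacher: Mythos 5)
Your strategy is essentially the paper's: translate the equilibrium to the origin, use the symmetric/antisymmetric splitting to decouple the linear part, reduce to a two-dimensional center manifold with $\delta\Omega=\Omega-\Omega_0$ adjoined as a trivial variable, and decide supercriticality from the transversality condition $\tau'(\Omega_0)>0$ together with the sign of the cubic coefficient in the normal form. The only formulation difference is that you use the complex projection method (eigenvectors $v,w$, coefficients $g_{jk}$, and Kuznetsov's formula for $c_1$), whereas the paper writes the center manifold as a real graph $X=\mathbf h(Y,\delta\Omega)$ over the critical pair $Y=(y,x)$, passes to the real Jordan form of $\mathbb A(\Omega)$, and applies Wiggins's real-coordinate formula for $\tilde\tau$; the two are equivalent via $\tilde\tau=\operatorname{Re}c_1(\Omega_0)$. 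Note also that the paper's variables $(r,s,z)$ refine your antisymmetric block: your three-dimensional $(\delta^c,\delta^s,x)$ block is only block-triangular, with $\delta^c$ slaved to the autonomous critical pair, so the critical eigenvalues really live in a clean $2\times2$ block. The one substantive step you leave undone is the explicit evaluation of $\operatorname{Re}c_1(\Omega_0)$ and the verification that it is negative; this computation is the entire content of Appendices \ref{appendix: center manifold computations} and \ref{app: tilde tau}, and since supercriticality rests solely on that sign, asserting that the outcome ``is manifestly negative'' does not by itself close the argument, even though the machinery you describe would indeed produce the stated value of $\tilde\tau$.
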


\begin{proof}

\noindent {\textbf{Change of variables.}}\\
In the first part of the proof we restrict the dynamical system to the center manifold. To compute the latter, we bring the system to a more suitable formulation.

Let us first transform the fixed point of \eqref{eq: ODE p1 q1 th general} to the origin. We define the new variables as
\begin{equation*}
    \delta p_1^c = p_1^c - p_{1,eq}^c,\, \delta p_1^s = p_1^s - p_{1,eq}^s,\,\delta q_1^c = q_1^c - q_{1,eq}^c, \, \delta q_1^s = q_1^s - q_{1,eq}^s.
\end{equation*}
 We then use the following linear and invertible change of variables
\begin{equation*}\label{eq: change of variable th}
    \begin{array}{cccc}
         r = \frac{a_1}{b_1}\delta p_1^{c} + \delta p_1^{s}, & 
         s = \frac{a_1}{b_1}\delta q_1^{c} +\delta q_1^{s}, &
         z = \frac{1}{2}(\delta p_1^{s} + \delta q_1^{s}), & 
         y = \frac{1}{2}(\delta p_1^{s} - \delta q_1^{s}),
    \end{array}
\end{equation*} 
and shorten the notation by taking $X=(r,s,z)^T$, and $Y=(y,x)^T$ to transform the system \eqref{eq: ODE p1 q1 th general} into
\begin{equation} \label{eq: 5d compact th general}
    \frac{d}{dt}\left(\begin{array}{l}
X\\
Y
\end{array}\right) = \mathbb{M}(\Omega) \left(\begin{array}{l}
X\\
Y
\end{array}\right) + \left(\begin{array}{c}
    G(X,Y)\\
    F(X,Y)
    \end{array}\right),
\end{equation}
where
\begin{equation*}
    \mathbb{M}(\Omega)=\left(
\begin{array}{ccccc}
 -a_0\text{Id}_{3,3} & 0\\
 0 & \mathbb{A}(\Omega)
\end{array}
\right)
\end{equation*}
\corrThm{is a block diagonal matrix with
\begin{equation}
     \mathbb{A}(\Omega)= \left(
\begin{array}{cc}
  -a_0 - \frac{\lambda}{\ell}\frac{{a_1}}{a_0} & \hspace*{0.5cm}- \frac{\zeta}{\ell}\frac{{a_1}}{a_0} \\\\
 -\frac{\lambda  \ell}{2 \pi } & \hspace*{0.5cm}-\frac{\zeta  \ell}{2 \pi } \\
\end{array}
\right),
\end{equation}
and the non linear part is defined as
\begin{equation*}
    G(X,Y,\Omega)=(\zeta   x+\lambda  y)\left(\begin{array}{c}
     \displaystyle\frac{{a_1}}{{b_1}} (y+z)+ \displaystyle\frac{{b_1}}{{a_1}}(-r+y+z)  \\[10pt]
   \displaystyle\frac{a_1}{b_1} (y-z)+ \frac{b_1}{a_1} (s+y-z)\\[10pt]
    -\displaystyle \frac{{b_1}}{{a_1}} \frac{ (r-s-2 y)}{2}
    \end{array}\right)\,,
\end{equation*}
and
\begin{equation}
    F(X,Y,\Omega)=(\zeta  x+\lambda  y)\left(\begin{array}{c}
     -\displaystyle\frac{{b_1}}{{a_1}}\frac{(r+s-2 z)}{2}\\
    0
    \end{array}\right),
\end{equation}
where we remind the reader that $a_0,a_1$ and $b_1$ depend on $\Omega$.}

\noindent{\textbf{Computation of the center manifold.}}

The center manifold can then be computed by using standard techniques (see \cite{Wiggins}, Chapter 20, Section 2). We start by rewriting the system in such a way that $\Omega_0$ is moved to the origin through the change of variable $\delta\Omega\,= \Omega - \Omega_0$. As it is classical, we treat $\delta\Omega\,$ as a variable of the system. This means that we add the equation $\dot {\delta\Omega\,}=0$ to the dynamical system and that the non linear part of the system includes all the products $\delta\Omega\, r$, $\delta\Omega\, s$, $\delta\Omega\, z$ etc. 

\corrThm{Since the terms in the matrix $\mathbb{M}$ are nonlinear in $\Omega$, we expand them near $\Omega_0$ as for instance
\begin{equation}\label{eq: Omega expansion}
a_0(\Omega_0+\delta \Omega) = a_0(\Omega_0)+a'_0(\Omega_0)\delta \Omega + O(\delta\Omega^2).
\end{equation}

Using such expansions in the system \eqref{eq: 5d compact th general}, we obtain
\begin{equation} \label{eq: 5d compact th general centered in zero}
    \frac{d}{dt}\left(\begin{array}{c}
X\\
Y\\
\delta\Omega
\end{array}\right) = \mathbb{M}(\Omega_0)\left(\begin{array}{c}
X\\
Y\\
0
\end{array}\right) + \left(\begin{array}{c}
g(X,Y,\delta\Omega)\\
f(X,Y,\delta\Omega)\\
0
\end{array}\right),
\end{equation}
with \[g(X,Y,\delta\Omega)= -a'_0(\Omega_0)\delta \Omega X +G(X,Y,\Omega_0) + O(|\delta\Omega|^3+ |X|^3)\]
and
\begin{equation*}
\begin{split}
    f(X,Y,\delta\Omega) &= -\left(\begin{array}{c}
\left(a'_0(\Omega_0)  +\displaystyle\frac{\lambda}{\ell}\left(\frac{{a_1}}{a_0}\right)'(\Omega_0)\right)\delta\Omega\, y +\displaystyle \frac{\zeta}{\ell} \left(\frac{{a_1}}{a_0}\right)'(\Omega_0)\delta\Omega\, x\\\\
    0
    \end{array}\right)\\
    &\hspace*{5cm}+ F(X,Y,\Omega_0)+ O(|\delta\Omega|^3+ |X|^3).
\end{split}
\end{equation*} }

We can define a center manifold as 
\begin{equation*}
    W^c(0) = \{(X,Y,\delta\Omega): X= \mathbf{h}(Y,\delta\Omega),
    |Y|<\eps, |\delta\Omega\,|<\bar{\eps}, \mathbf{h}(0,0)=0,\,\nabla \mathbf{h}(0,0)=0\},
\end{equation*}
for $\eps$ and $\bar{\eps}$ sufficiently small and $\mathbf{h}=(h_1,h_2,h_3)$ smooth enough.

In order for $\mathbf{h}$ to be the center manifold for the system \eqref{eq: 5d compact th general centered in zero}, the graph of $\mathbf{h}(Y,\delta\Omega)$ has to be invariant under the dynamics generated by \eqref{eq: 5d compact th general centered in zero}. Hence, by plugging $\mathbf{h}$ into the system and compute its derivative, we get the following quasilinear differential equation 
\begin{equation}\label{eq: N(h)=0 general}
    \nabla_{Y} \mathbf{h} \cdot \left(\mathbb{A}(\Omega_0)Y+ f(\mathbf{h},Y,\delta\Omega)\right)=-a_0(\Omega_0)\mathbf{h} + g(\mathbf{h},Y,\delta\Omega).
\end{equation}

The solution $\mathbf{h}(Y,\delta\Omega)$ of \eqref{eq: N(h)=0 general} can be approximated with a power series expansion up to any desired degree of accuracy. In our case we expand them up to order two defining, for $i=1,2,3$ and $Y=(y,x)^T$
\begin{equation}\label{h_i}
    h_i(Y,\delta\Omega):= a_{i1}y^2+a_{i2} y x+a_{i3} y \delta\Omega\, +a_{i4} x^2+a_{i5} x \delta\Omega\, +a_{i6} \delta\Omega\, ^2 + O(|\delta\Omega|^3+ |X|^3).
\end{equation}
To completely determine (locally) the center manifold, we have to compute the coefficients $a_{ij}$ knowing that the functions defined in \eqref{h_i} must solve \eqref{eq: N(h)=0 general}. For the detailed computations, we refer the reader to Appendix \ref{appendix: center manifold computations}.

Restricted to the center manifold, the original system \eqref{eq: 5d compact th general} has the following form:
\begin{equation} \label{eq: Hopf restricted to CM}
    \frac{d}{dt}Y = \mathbb{A}(\Omega) Y + f(\mathbf{h}(Y),Y)+O(|\delta\Omega|^3+ |X|^3)
\end{equation}
where
\corrThm{\begin{equation*}
 f(\mathbf{h}(Y),Y)=
    \left(\begin{array}{c}
\displaystyle-\frac{2 \pi  a_0(\Omega_0) \left(\zeta ^3 x^3+\lambda ^3 y^3\right)+\zeta  \ell (\zeta  x+\lambda  y)^3}{2 a_0(\Omega_0) \lambda  (\pi  a_0(\Omega_0)+2 \zeta  \ell)}\\
    0
    \end{array}\right)+O(|\delta\Omega|^3+ |X|^3).
\end{equation*}}

Observe that $\tau(\Omega)  \pm i \omega(\Omega) $ is the pair of conjugated eigenvalues of $\mathbb{A}(\Omega)$ and therefore we are in the hypothesis of the existence of a Hopf bifurcation for a two-dimensional system.\\

\noindent {\textbf{Normal form.}}

The next step is to bring system \eqref{eq: Hopf restricted to CM} into its normal form, from which we deduce the type of Hopf bifurcation that the system is attaining.

In order to proceed, we apply a further change of coordinates, such that $\mathbb{A}(\Omega)$ is transformed to its real Jordan form. Namely, we define the transformation matrix
\begin{equation*}
    \mathbb{P}=\begin{pmatrix}
        P_{11}&\quad& P_{12}\\
        1 &\quad&0
    \end{pmatrix},
\end{equation*}
where 
\corrThm{\[P_{11}=\displaystyle\frac{ \pi }{\lambda  \ell} \left(\frac{\lambda}{\ell}\frac{{a_1}}{a_0}+a_0\right)-\frac{\zeta}{2 \lambda  }\] and \[P_{12} = \displaystyle\frac{\sqrt{-4 \pi  \lambda  \ell a_0 a_1 (2 \pi  a_0+\zeta  \ell)-\ell^2 a_0^2 (\zeta  \ell-2 \pi  a_0)^2-4 \pi ^2 \lambda ^2 a_1^2}}{2 \lambda  \ell^2 a_0}. \]}
The matrix $\mathbb{P}$ defines the new coordinates $Y= \mathbb{P}\tilde{Y}$ thanks to which equation \eqref{eq: Hopf restricted to CM} can be expressed in its normal form
\begin{equation} \label{eq: Hopf ODE final}
     \frac{d}{dt}\tilde{Y} = \begin{pmatrix}
    \tau(\Omega) &-\omega(\Omega)\\
        \omega(\Omega) &\tau(\Omega)
\end{pmatrix} \tilde{Y} + \mathbb{P}^{-1}f(\mathbf{h}(\mathbb{P}\tilde Y),\mathbb{P}\tilde Y)+O(\|\delta\Omega\|^3+ \|Y\|^3).
\end{equation}

To compute key properties of the system, such as the amplitude of the limit cycles, we change coordinates to the polar ones ${\tilde Y}^T= \rho (\sin(\theta),\cos(\theta))$, and get
\begin{equation}\label{eq: 5d normal form polar coordinates}
    \left\{\begin{array}{l}
         \dot{\rho}(t) = \tau'(\Omega_0) \delta \Omega \rho(t)+ \tilde{\tau} \rho^3(t) + O(\delta \Omega^2\rho,\delta \Omega\rho^3),  \\
         \dot{\theta}(t) =\omega(\Omega_0) + \omega'(\Omega_0) \delta \Omega +  \tilde{\omega}\rho^2(t) +O(\delta \Omega^2,\delta \Omega\rho^2).
    \end{array}\right.,
\end{equation}
that is the normal form of \eqref{eq: Hopf restricted to CM} in polar coordinates around $\Omega_0$.

The non linear part $\mathbb{P}^{-1}f(\mathbf{h}(\mathbb{P}\tilde Y),\mathbb{P}\tilde Y)$ determines the constants $\tilde{\tau}$ and $\tilde{\omega}$. The first one is involved in the expression for the limit cycle amplitude, and we compute it using a well know formula (see Appendix \ref{app: tilde tau}). \corrThm{We obtain
\begin{equation*}
\tilde{\tau} = - \frac{3 \pi \zeta}{4 \ell} \left(\frac{\pi a_0(\Omega_0) + \ell \zeta}{\pi a_0(\Omega_0) + 2 \ell \zeta}\right)\,.
\end{equation*}

Since $\tilde{\tau}$ is negative and
\begin{equation*}
    \tau'(\Omega_0) = -\frac{a_0'(\Omega_0) \left(\ell a_0(\Omega_0)^2-\lambda  a_1(\Omega_0)\right)+\lambda  a_0(\Omega_0) a_1'(\Omega_0 )}{2 \ell a_0(\Omega_0)^2},
\end{equation*}}\,\\
is positive by hypothesis, the reduced system \eqref{eq: Hopf restricted to CM}, and hence, the whole system \eqref{eq: 5d compact th general}, shows a supercritical Hopf bifurcation near the bifurcation parameter $\Omega_0$.

In particular, for $\Omega$ sufficiently near and greater then $\Omega_0$ there exists an asymptotically stable periodic orbit with radius and \corrThm{frequency as in \eqref{eq: th amplitude Hopf} and \eqref{th:freq}, respectively}.

This finishes the proof of Proposition \ref{prop: first order coeff}
\end{proof}

\noindent {\textbf{Step 3.}} Higher order terms. Lastly, we are going to prove that, after large times, the solutions of \eqref{eq: general pn qn th} $p_n^{c,s}(t)$ and $q_n^{c,s}(t)$, with $n>1$ are periodic with the same period as $x(t)$. We remark again that, since the dynamics of $x$ \eqref{eq: force balance expansion} is independent of $p_n^c, \, p_n^s, \,  q_n^c,\,  q_n^s$ for $n > 1$, we may assume that $x(t)$ is given and periodic, and split the system into pairs of coupled equations as stated in the following Proposition.
\begin{proposition}[Higher order coefficients]\label{th: n order coeff}
   For $n > 1$ consider the infinite system of four equations:
\begin{equation}
    \left\{\begin{array}{l}
\dot{p^{c}_n}(t) + \displaystyle \frac{2 \pi n}{\ell} \dot{x} p^{s}_n(t)= - a_0(\Omega) p^{c}_n(t) + a_n(\Omega)/ \ell,\\
\dot{p^{s}_n}(t) - \displaystyle \frac{2 \pi n}{\ell} \dot{x} p^{c}_n(t)= - a_0(\Omega) p^{s}_n(t) + b_n(\Omega)/ \ell.
\end{array}\right.
\label{eq:pn}
\end{equation}
and
\begin{equation}
    \left\{\begin{array}{l}
\dot{q}^c_n(t) - \displaystyle\frac{2 \pi n}{\ell} \dot{x} q^{s}_n(t)= - a_0(\Omega) q^{c}_n(t) + a_n(\Omega)/ \ell,\\
\dot{q}^{s}_n(t) + \displaystyle\frac{2 \pi n}{\ell} \dot{x} q^{c}_n(t)= - a_0(\Omega) q^{s}_n(t) + b_n(\Omega)/ \ell.
\end{array}\right.
\label{eq:qn}
\end{equation}
Suppose that, for $t \geq 0$, the function $t \mapsto x(t)$ is periodic of period $T$. Then, the solutions to the systems \eqref{eq:pn} and \eqref{eq:qn} may have two behaviors: they either converge in time to periodic solutions of the same period $T$ if $n$ is odd, or they go to zero for large times when $n$ is even. 
\end{proposition}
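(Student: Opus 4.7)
The plan is to treat \eqref{eq:pn} and \eqref{eq:qn} as linear non-autonomous $2\times 2$ systems whose time-dependence enters only through $\dot{x}(t)$, and which can therefore be solved explicitly. Introducing the complex unknown $P_n := p_n^c + i p_n^s$, system \eqref{eq:pn} collapses to the scalar linear ODE
\[
\dot{P}_n(t) \;=\; \Big(-a_0(\Omega) + i\,\tfrac{2\pi n}{\ell}\,\dot{x}(t)\Big)\,P_n(t) \;+\; \tfrac{c_n(\Omega)}{\ell},\qquad c_n := a_n + i b_n,
\]
and a parallel computation for $Q_n := q_n^c + i q_n^s$ yields the same equation with the sign of $n$ reversed. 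An integrating factor then gives the closed form
\[
P_n(t) \;=\; e^{-a_0 t + i\frac{2\pi n}{\ell}(x(t)-x(0))} P_n(0) \;+\; \frac{c_n}{\ell}\int_0^t e^{-a_0(t-s)+i\frac{2\pi n}{\ell}(x(t)-x(s))}\,ds.
\]

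Since $a_0(\Omega)>0$, the homogeneous contribution decays like $e^{-a_0 t}$ and the long-time behavior is driven by the particular integral. I would introduce the candidate periodic profile
\[
P_n^{\infty}(t) \;:=\; \frac{c_n}{\ell}\int_{-\infty}^{t} e^{-a_0(t-s)+i\frac{2\pi n}{\ell}(x(t)-x(s))}\,ds,
\]
which is well defined because of the exponential weight, and verify $T$-periodicity by the change of variable $s \mapsto s+T$ combined with $x(s+T)=x(s)$. The difference $P_n(t) - P_n^{\infty}(t)$ then reduces, up to the initial-datum term, to the tail $\int_{-\infty}^{0} e^{-a_0(t-s)+\cdots}\,ds$, whose modulus is bounded by $|c_n|\,e^{-a_0 t}/(\ell a_0)$ and hence decays exponentially as $t\to\infty$. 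This shows that $P_n(t)$ converges to the $T$-periodic function $P_n^{\infty}(t)$, and the same reasoning applies verbatim to $Q_n$.

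The dichotomy between odd and even $n$ is then a direct consequence of the Fourier expansion \eqref{eq: om2, om1 general choices}: the sum defining $\omega_B$ runs only over odd indices, so for even $n\geq 2$ we have $a_n(\Omega) = b_n(\Omega) = 0$, i.e.\ $c_n = 0$. In that case $P_n^{\infty}\equiv 0$ and only the exponentially decaying homogeneous term survives, giving $p_n^{c,s}(t),\,q_n^{c,s}(t) \to 0$. For odd $n > 1$, $c_n$ is generically non-zero and the solution is attracted to the $T$-periodic profile $P_n^{\infty}$. I do not expect any serious obstacle in this argument: once the complex reformulation is made, every step is an explicit estimate using $|e^{i\theta}|=1$ and the exponential decay $e^{-a_0 t}$, the only mild care being to justify uniformity in the phase factor when comparing $P_n^p(t+T)$ with $P_n^p(t)$.
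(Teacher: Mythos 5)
Your argument is correct and follows the same route as the paper: complexify via $z_n=p_n^c+ip_n^s$, solve the resulting scalar linear ODE with an integrating factor, kill the homogeneous part by the decay $e^{-a_0 t}$, and observe that $c_n=a_n+ib_n=0$ for even $n$ because of the Fourier structure \eqref{eq: om2, om1 general choices}. The one place where you genuinely diverge is the treatment of odd $n$: the paper shows that $\tilde z_n(t+T)-\tilde z_n(t)\to 0$ and concludes periodicity in the limit, whereas you construct the limiting profile explicitly as
\[
P_n^{\infty}(t)=\frac{c_n}{\ell}\int_{-\infty}^{t}e^{-a_0(t-s)+i\frac{2\pi n}{\ell}(x(t)-x(s))}\,ds,
\]
check its $T$-periodicity directly, and bound $|P_n(t)-P_n^{\infty}(t)|$ by an explicit exponentially decaying quantity. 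This is a slightly stronger and cleaner conclusion, since asymptotic $T$-periodicity alone ($f(t+T)-f(t)\to 0$) does not in general imply convergence to a periodic function; your version supplies the limit object explicitly and quantifies the rate. The only small point to make explicit is that $x$ must be extended to negative times to define $P_n^{\infty}$, which is immediate since $x$ is $T$-periodic on $[0,\infty)$ and can be extended periodically to all of $\mathbb{R}$.
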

\begin{proof}
We introduce $z_n = p_n^c + ip_n^s$.
The first two equations of \eqref{eq:pn} then become: 
\begin{equation}
    \dot{z}_n + \left(a_0(\Omega) -\frac{2in\pi}{\ell}\dot x\right)z_n = c_n
\end{equation}
where $\displaystyle c_n = \frac{a_n(\Omega) + ib_n(\Omega)}{\ell}$ is constant. We thus deduce the following expression for $z_n(t)$
\begin{equation} \label{eq: zn(t)}
    z_n(t)  = \displaystyle c_n \int\limits_0^te^{-\int\limits_u^t(a_0(\Omega) - i\bar v(s))ds}du + z_n(0)e^{-\int\limits_0^t (a_0(\Omega)-i\bar v(s))ds}\,,
\end{equation}
where $\bar{v} = \frac{2n\pi}{\ell}\dot{x}$.

Let us first notice that the second term, $ \displaystyle z_n(0)e^{-\int\limits_0^t (a_0(\Omega)-i\bar v(s))ds}$ goes to zero when $t$ goes to infinity. This, together with the fact that $a_n=b_n=0$ if $n$ is even (see \eqref{eq: om2, om1 general choices}), gives $c_n=0$ and the solutions $p_n^{c,s}$ go to zero for large times when $n$ is even. 

Let us now focus on the case where $n$ is odd. We want to prove that, when $\bar v$ is periodic of period $T$, $z_n$ converges towards a $T$-periodic solution after a transitory regime. We again notice that the second term of 
\eqref{eq: zn(t)} converges to 0, and the result will follow by studying only the first term.

We thus denote by $\tilde{z}_n(t)= \int\limits_0^te^{-\int\limits_u^t(a_0 - i\bar v(s))ds}du$, and compute
\begin{equation}
    \begin{array}{lll}
         \displaystyle \tilde{z}_n(T+t)- \tilde{z}_n(t) & =  \int\limits_0^{t+T}e^{-\int\limits_u^{t+T}(a_0 - i\bar v(s))ds}du - \int\limits_0^te^{-\int\limits_u^t(a_0 - i\bar v(s))ds}du \\
         & =  \displaystyle \int\limits_0^Te^{-\int\limits_u^{t+T}(a_0 - i\bar v(s))ds}du + \displaystyle \int\limits_T^{t+T}e^{-\int\limits_u^{t+T}(a_0 - i\bar v(s))ds}du \\&\quad-\int\limits_0^te^{-\int\limits_u^t(a_0 - i\bar v(s))ds}du\,.
    \end{array}
\end{equation}
Using the fact that $\bar{v}$ is $T$-periodic, the last two terms cancel and we obtain
\begin{eqnarray*}
 \tilde{z}_n(T+t)- \tilde{z}_n(t)&=& \displaystyle \int\limits_0^Te^{-\int\limits_u^{t+T}(a_0 - i\bar v(s))ds}du\\
 &=& e^{-(t+T)a_0}\displaystyle \int\limits_0^Te^{ua_0 + \int\limits_u^{t+T}i\bar v(s)ds}du
\end{eqnarray*}
which goes to $0$ when $t$ goes to infinity. 

We therefore proved that $z_n$ converges to a $T$-periodic function.
The same result holds for $(q^c_n,q^s_n)$ solution to \eqref{eq:qn}.
\end{proof}

Finally, if we have initial conditions that are close to the equilibrium point of system \eqref{eq: 2 layers full PDE}, and an amount of ATP $\Omega$, which is close to $\Omega_0$, we know exactly how the solution of the system evolves in time. The first order coefficients of \eqref{eq: fourier P and Q} go exponentially fast to the constant term $1/\ell$, the first order coefficients starts to oscillate in a limit cycle, and the higher order terms either go to zero or have the same patterns as $p_1^{c,s}$ and $q_1^{c,s}$ with same period of oscillation. Globally, the solution of \eqref{eq: 2 layers full PDE}, shows a supercritical Hopf-bifurcation in time, close enough to the parameter $\Omega_0$.\\

\corrThm{\textbf{Comment on hypotheses in Theorem \ref{th: Hopf bifurcation}.}
We have shown that if there exists a value $\Omega_0 > 0$ such that $\tau(\Omega_0) = 0$ and $\tau'(\Omega_0) > 0$, then the system undergoes a supercritical Hopf bifurcation. The coefficients $a_0$ and $a_1$ have to be suitably chosen to satisfy these hypotheses. 

In particular, to satisfy the first condition, the coefficient $a_1(\Omega_0)$ must be negative since it is given by:
\begin{equation} \label{a1}
    a_1(\Omega_0) = -\frac{\ell}{2 \lambda} \left[ 2 a_0^2(\Omega_0) + \frac{\zeta \ell}{\pi} a_0(\Omega_0) \right] < 0,
\end{equation}
with $a_0(\Omega_0)$ positive.

Concerning the second condition, note that it has a physical meaning since increasing ATP concentration $\Omega$ drives the filament configuration from a stable to an oscillatory regime. Notice that, if $\tau'(\Omega_0) < 0$, the system still undergoes a supercritical Hopf bifurcation, but in this case, the stable periodic orbit emerges as $\Omega$ decreases, which contradicts the hypothesis that $\Omega$ is the ATP concentration. This second condition is equivalent to
\begin{equation}
    a_1'(\Omega_0) > -\frac{\zeta \ell^2}{2 \lambda \pi} a_0'(\Omega_0),
\end{equation}
where equation \eqref{a1} has been used to substitute $a_1(\Omega_0)$ in the derivative $\tau'(\Omega_0)$.

For example, if we take $a_0(\Omega)=c>0$, then $a_1(\Omega_0) = -\frac{\ell}{2 \lambda} \left[ 2 c^2 + \frac{\zeta \ell}{\pi} c \right]=C$. We can then make a simple choice for $a_1(\Omega)$, taking an affine function $a_1(\Omega)= B (\Omega-\Omega_0) + C$, where $B$ is a constant. Depending on the sign of $B$ we are in the hypothesis of Theorem \ref{th: Hopf bifurcation} (if $B< 0$) or not (if $B>0$).
}

\corr{\subsection{$N$-row model: theoretical considerations}} \label{sec: Nbifurcation}
In order to have a general idea of the system \eqref{eq: Nlayers} dynamics, as in \eqref{eq: fourier P and Q}, we expand in Fourier series $Q_1, \dots, Q_N$ and treat order by order the Fourier coefficients of the probabilities. Then, to investigate the existence of a Hopf bifurcation we consider its linearization, as it was done with the two-row model.
For the first order coefficients, we obtain a $3N-1$ dimensional ODE system with unknowns: \[p_j^{c,s},\quad j=1,\dots,N,\quad\quad x_k,\, k=1,\dots,N-1.  \]
We then perform some linear change of variables, similarly to \eqref{eq: change of variable th}, in order to write the Jacobian of the system as a block matrix. The first block is a $N+1$ diagonal matrix with real and negative entries $-a_0(\Omega)$. The rest of the linearized system is defined by the following equations 
\begin{equation} \label{eq: N-1 copies}
\left(\begin{array}{c}
     \dot q_k   \\
    \dot w_k \\
\end{array}\right)=
    \begin{pmatrix}
         - \frac{2 \pi^2}{\ell \eta} U p_e - a_0 \quad& - \frac{2 \pi}{\ell \eta} k p_e \\
        - \pi U /\eta & - k / \eta \\
    \end{pmatrix} \left(\begin{array}{c}
         q_k   \\
     w_k\\
\end{array}\right)
\end{equation}
for $k=1,\dots,N-1$, where  $p_e = a_1/(a_0 \ell)$, $q_k = p_k^s - p_{k+1}^s$ for $k=1,\dots,N-1$, and 
\begin{equation}
    \left(\begin{array}{c}
           w_1\\
            \vdots\\
            w_{N-1}
    \end{array}\right) =
     \begin{pmatrix}
    2 & -1 & & & \\
    -1 & 2 & -1 & & \\
    & & \ddots & & \\
    & & -1 & 2 & -1 \\
    & & & -1 & 2
  \end{pmatrix} 
    \left(\begin{array}{c}
           x_1\\
            \vdots\\
            x_{N-1}
    \end{array}\right).
\end{equation}

It follows that the Jacobian has $N+1$ real and negative eigenvalues $-a_0(\Omega)$, and $N-1$ identical pairs of complex and conjugated ones $\mu_k(\Omega)$ for $k=1,\dots, N-1$ which come from \eqref{eq: N-1 copies}. We observe that $\mu_k(\Omega)=\tau(\Omega) + i \omega(\Omega)$, with $\tau$ and $\omega$ defined as in Theorem \ref{th: Hopf bifurcation} by equations \eqref{eq: tau} and \eqref{eq: imaginary_smallomega}. 
Then, if there exists $\Omega=\Omega_0$ such that $\tau(\Omega_0)=0$ and $\tau'(\Omega_0)>0$, the eigenvalues cross the imaginary axis all at the same bifurcation parameter. Thus, there is a suggestion of a bifurcation in the dynamics at $\Omega_0$, indicated by the linear part of the complete dynamical system. We will not investigate the theoretical aspects of this model further in this paper, as the Central Manifold Theorem used in previous section, does not apply to such systems. However, as shown in the next section, numerical simulations still suggest a potential pattern in the oscillations and hint that there is indeed a bifurcation.\\

\section{Numerics}\label{sec : numerical_scheme}
We now describe the numerical scheme used throughout the remaining of the paper. We only write it for $N=2$, but it can easily be extended to $N$-row models, for $N>2$ or restricted to the $1$-row model.

We write a first-order upwind scheme for the densities $P$ and $Q$ where, after each step, we update the velocity $v$. Let us take $\Delta x$ such that $\ell = J \Delta x$ for $J \in \mathbb{N}^*$, and $\Delta t$ the time step. We define $P_j^n := P(j \Delta x, n \Delta t)$ and $Q_j^n := Q(j \Delta x, n \Delta t)$ for $n\geq 1$ and $1 \leq j \leq J$. Being $\ell$-periodic, we may extend $P_j^n$ for all $j \in \mathbb{Z}$ by setting $P_{j+J}^n=P_j^n$ for all $j \in \mathbb{Z}$ and all $n \in \mathbb{N}$; the same can be done for $Q_j^n$. At a fixed time step $t= n \Delta t$, $n\geq 1$, the velocity $v^n$ of the central tubule pair being known,  we compute, for $j \in \mathbb{Z}$
\begin{align}
\left\{\begin{array}{lllll}
    \displaystyle S^n_j = -a_0 P^n_j + \omega_B(j\Delta x)/\ell, & \\\\
    \displaystyle T^n_j = -a_0 Q^n_j + \omega_B(j\Delta x)/\ell, & \\\\
    P^{n+1}_j = \left\{\begin{array}{lll} \displaystyle \left( 1 - v^{n} \frac{\Delta t}{\Delta x}\right) + v^{n} \frac{\Delta t}{\Delta x}P^n_{j-1} + \Delta t S^n_j, & \text{if } v^{n} >0,\\
     \displaystyle \left( 1 + v^{n} \frac{\Delta t}{\Delta x}\right) - v^{n} \frac{\Delta t}{\Delta x}P^n_{j+1} + \Delta t S^n_j, & \text{if } v^{n} <0,\end{array}\right.\\\\
    Q^{n+1}_j = \left\{\begin{array}{ll}\displaystyle \left( 1 + v^{n} \frac{\Delta t}{\Delta x}\right) - v^{n} \frac{\Delta t}{\Delta x}Q^n_{j-1} + \Delta t T^n_j, & \text{if } v^{n} >0,\\
     \displaystyle \left( 1 - v^{n} \frac{\Delta t}{\Delta x}\right) + v^{n} \frac{\Delta t}{\Delta x}Q^n_{j+1} + \Delta t T^n_j, & \text{if } v^{n} <0,\end{array}\right.
\end{array}\right.
\label{eq: upwind}
\end{align}
and $(x^{n+1}, \,v^{n+1})$ are
\begin{align}
\left\{\begin{array}{ll}
    \eta v^{n+1} = \Delta x \sum\limits_{j=1}^J \left(\left(P^{n+1}_j - Q^{n+1}_j\right)\partial_{\xi} \Delta W(j\Delta x)\right) - kx^{n},\\
    x^{n+1} = x^{n} + v^{n+1} \Delta t.
\end{array}\right.
\label{eq: speedupwind}
\end{align}
Notice that, in \eqref{eq: upwind}, only one of the two updates is used for all $j$. Moreover, due to the stability of the upwind scheme, we need to check at each iteration that $\displaystyle \left| v^n \frac{\Delta t}{\Delta x}\right| <1$.

\corr{\subsection{Choice for parameters}\label{sec: alpha}

In order to implement the numerical simulations, we have to characterize $\omega_A$ and $\omega_B$ by fixing their coefficients. For simplicity, we impose that $a_n=b_n=0$ for $n>1$ since, as observed in Theorem \ref{th: Hopf bifurcation}, they have no influence over the displacement's oscillations. We choose $b_1 \neq 0$ to make the transition rates not-symmetric, as in \cite{guerin2011dynamical}.
\corrThm{We opted for particularly simple polynomial functions, defining \[a_0(\Omega) = c\Omega^{1/2},\quad\quad a_1(\Omega)=b_1(\Omega)= d\Omega,\] assuming $c$ and $\Omega_0$ are chosen. We analytically recover $d$ by imposing $\tau(\Omega_0)= 0$ 
\begin{equation}
d = d(\Omega_0) = -\frac{c \ell}{2 \pi \lambda} \left(2 \pi  c +\frac{\zeta  \ell}{\sqrt{\Omega_0}}\right)
\end{equation}
in such a way that the condition $\tau(\Omega_0)=0$ is satisfied.
By substituting $d$ into $\tau'(\Omega)$ we get the other condition $\tau'(\Omega_0) = \frac{\zeta  \ell}{8 \pi  \Omega_0}>0$.} 
 
Using the parameter values presented in Table 1, we get $d=d(\Omega_0) = -56.4588 \, s^{-1}$. Note that the parameters shown in Table 1, match F. Jülicher's \cite{camalet2000generic}. We have taken for $k_BT$ the value at $36^{\circ}C$ (human body temperature).

\begin{table}[ht!]
\centering
\caption{Values of the parameters used in numerical simulations.}
{\begin{tabular}{@{}cc@{}} 
\toprule
Parameter & Value \\
 \hline
        $\ell$ &  $10 \, nm$\\
        $k_BT$ &  $4.2668\cdot 10^{-3} \, nN \cdot nm$\\
        $\eta$ & $1.0 \cdot 10^{-7} \, kg/s$\\
        $k$ & $9.5 \cdot 10^{-5} \, kg/s^2$\\
        $U$ & $10 k_BT$\\
        $a_0^0 $ & $1.0 \cdot 10^{3} \, s^{-1}$\\
         $\Omega_0$ & $15 k_BT$\\ 
         \hline
\end{tabular}}
\end{table}
 
In the following, all simulations are carried out defining $\Delta W(\xi) = U \cos (2 \pi \xi / \ell)$.}

\subsection{$2$-row: numerical simulations}

\corr{Using the upwind scheme (\ref{eq: upwind}-\ref{eq: speedupwind}) and the parameters chosen above, we run simulations for the two-row model. We compare its behavior to the one of the one-row model~\eqref{eq: full pde 1row} for which the simulations are carried out using an obvious modification of the scheme.}

When the ATP concentration $\Omega$ is lower than $\Omega_0$, the central pair does not move from its zero equilibrium position. On the contrary, when the ATP concentration is high enough, we observe an oscillatory displacement between microtubules and in the probabilities. As shown in Figure \ref{fig: two-row simulations P1,P2, DeltaP, x}, the behavior of the two-row model with respect to the ATP concentration is comparable with the one for the one-row model.
 The main difference between the one-row model and the two-row one concerns the equilibrium position around which the solution $x(t)$ oscillates, as expected. This is clearly illustrated in Figure \ref{fig: two-row simulations P1,P2, DeltaP, x}(b), where the displacement takes place around zero, which is coherent with the symmetry of the physical problem. 
 


  \begin{figure}[ht!]
\centering
  \subfloat[]{\includegraphics[width=0.5\linewidth]{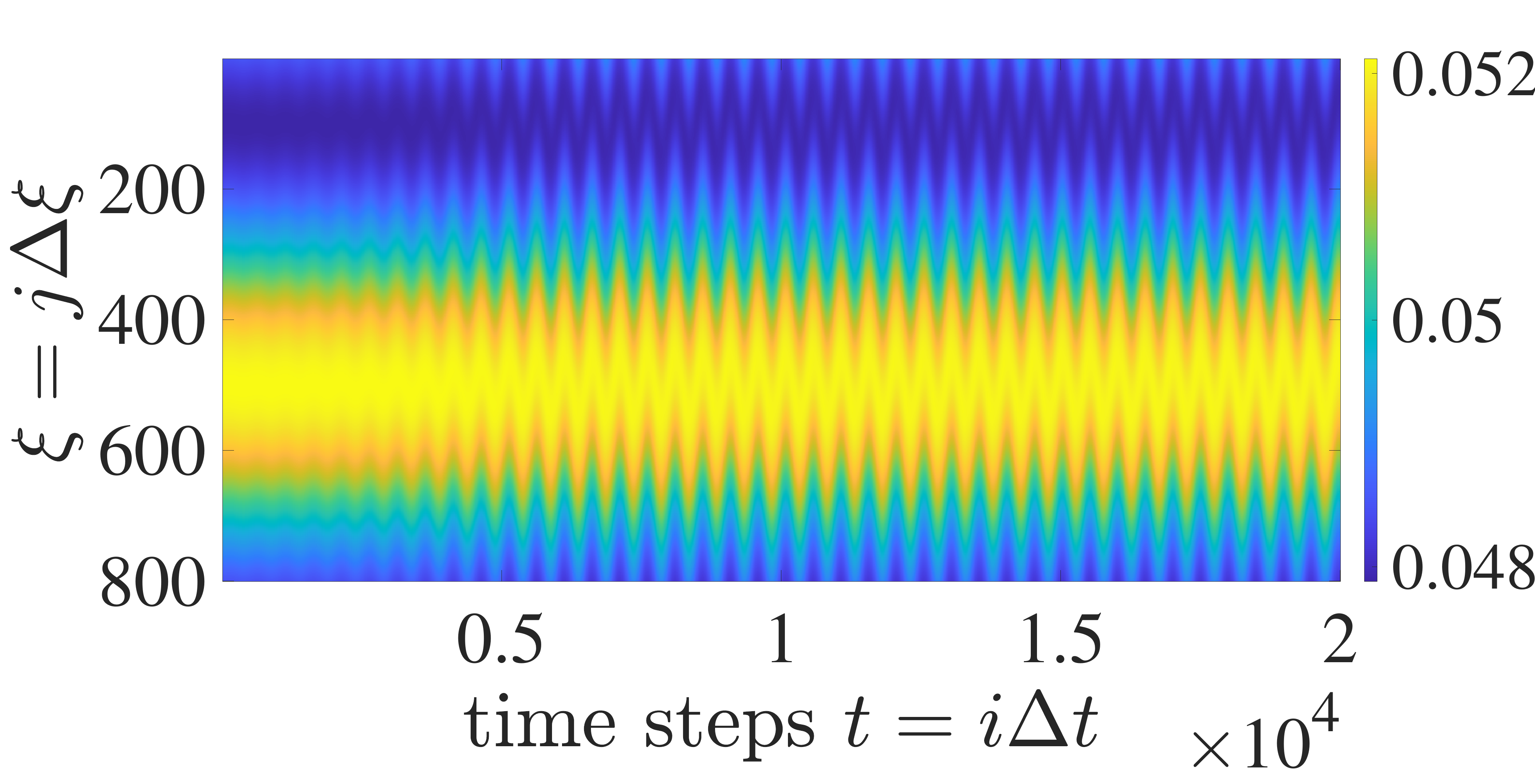}}
\hfil
    \subfloat[]{\includegraphics[width=0.5\linewidth]{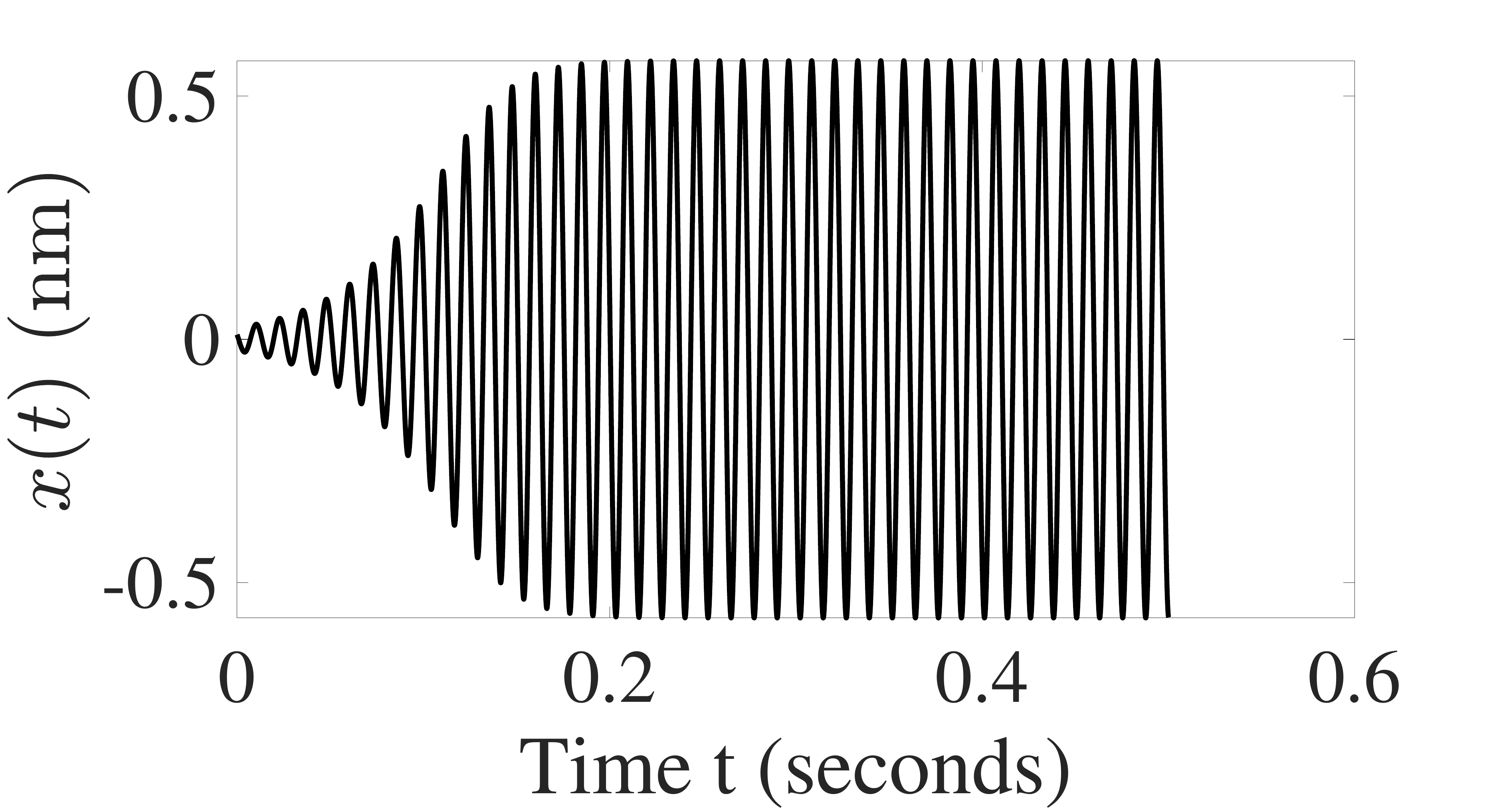}}

\centering
  \subfloat[]{\includegraphics[width=0.5\linewidth]{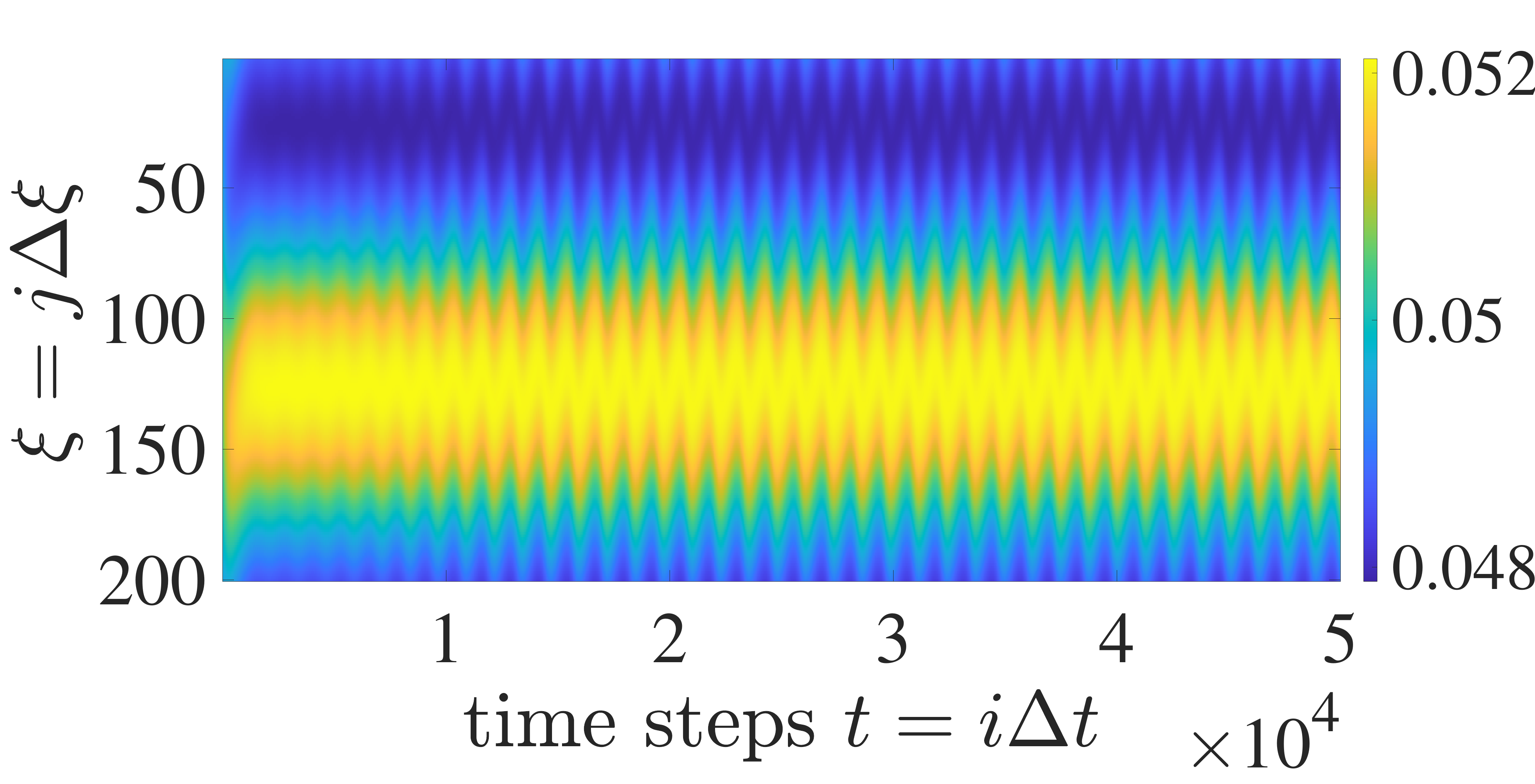}}
\hfil
    \subfloat[]{\includegraphics[width=0.5\linewidth]{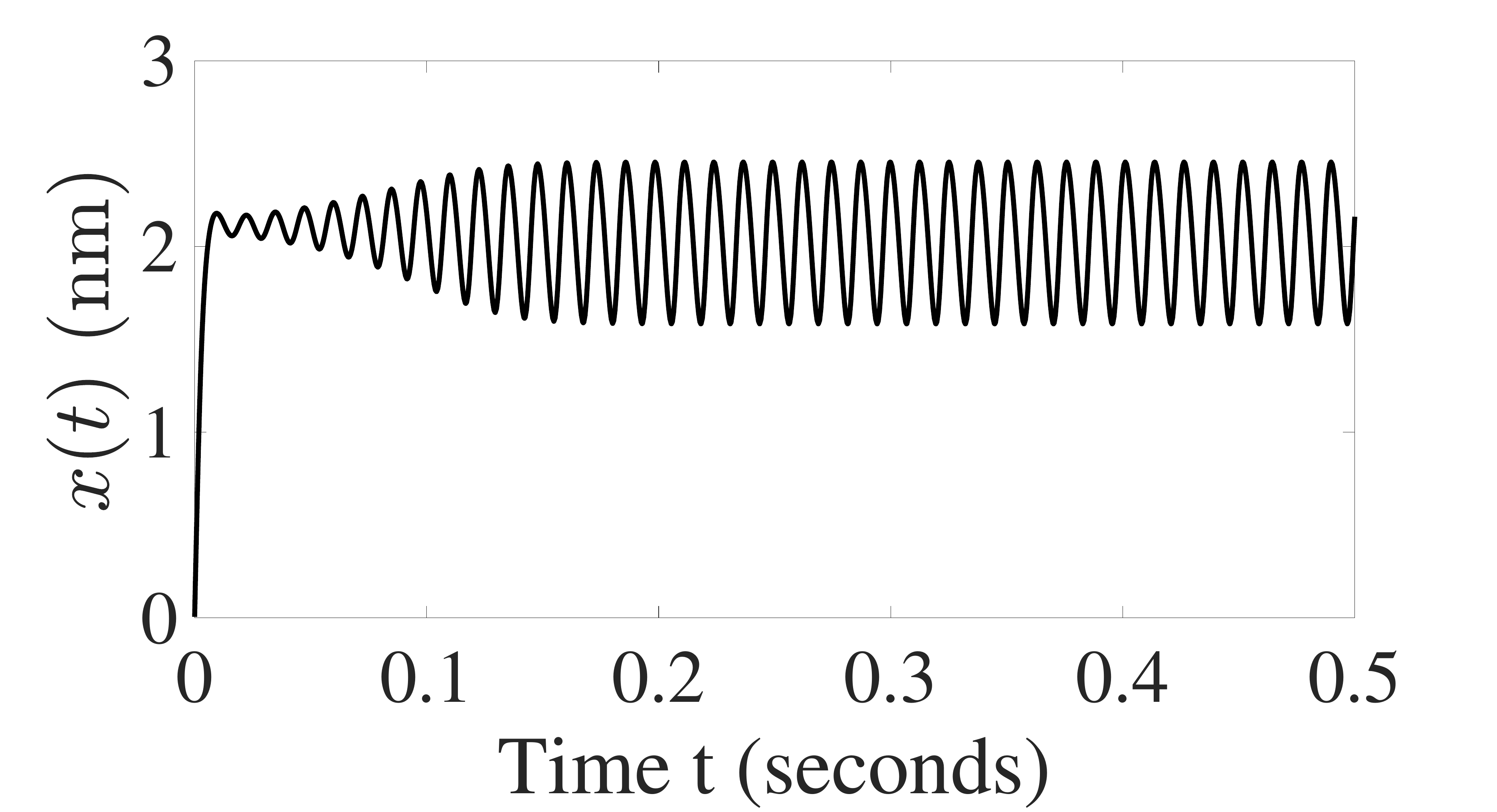}}
  \caption{Two-row model ((a),(b)) and one-row model ((c),(d)): (a) Probability density $P_1(\xi,t)$ of the top layer of being in state $1$ over time in a periodicity cell, in steady-state regime. The probability density $P_2(\xi,t)$ of the bottom layer of being in state $1$ is similar to $P_1(\xi, t)$. (b) Relative tubule shift $x$ over time. (c) Probability density $P(\xi,t)$ of the moving layer in the one-row model. (d) Relative tubule shift $x$ over time in the one-row model.}

    \label{fig: two-row simulations P1,P2, DeltaP, x}
    \end{figure}

\subsubsection{\corr{Amplitude of the oscillations}}
\corr{In this section, we study the evolution of the amplitude when the ATP concentration varies. In particular, we compare the \textit{theoretical amplitude} given by the asymptotic formula \eqref{eq: th amplitude Hopf} with the ones observed for the numerical solutions of the ODE system~\eqref{eq: ODE p1 q1 th general}, solved using MATLAB’s \texttt{ode45}, and of the the full PDE model \eqref{eq: 2 layers full PDE}, computed with the upwind scheme. In view of the shape of the solution, to determine the \textit{numerical amplitude} for both the ODE and the PDE, we calculate half of the difference between the maximum and minimum values of the displacement $x(t)$ along time, getting rid of the first 1000 time steps.} 

The parameter $\delta$ indicates the relative distance from the instability $ \Omega_0$; therefore, the point where the simulations is performed is $\Omega(\delta)=\Omega_0(1+\delta)$.
For this choice of parameters and neglecting higher-order terms, the theoretical amplitude \eqref{eq: th amplitude Hopf} simplifies to:
\begin{equation}\label{eq: th amplitude Hopf parameters choice}
    \rho(\delta)=\rho(\Omega(\delta))= \sqrt{\delta \frac{ \ell^2}{6 \pi^2} \left(\frac{\pi a_0^0\Omega_0^{1/2} + 2 \ell \zeta}{\pi a_0^0\Omega_0^{1/2} + \ell \zeta}\right)}.
\end{equation}
\\\\

In Figure \ref{fig:ampl_ODE_2rows_err_2rows }, simulations only start after $\delta=0.05$. This is related to the fact that the closer we are to $\Omega_0$, the slower the solution enters into its limit cycle. 
\begin{figure}[hbtp!]
\centering
    \subfloat[]{\includegraphics[width=0.5\linewidth]{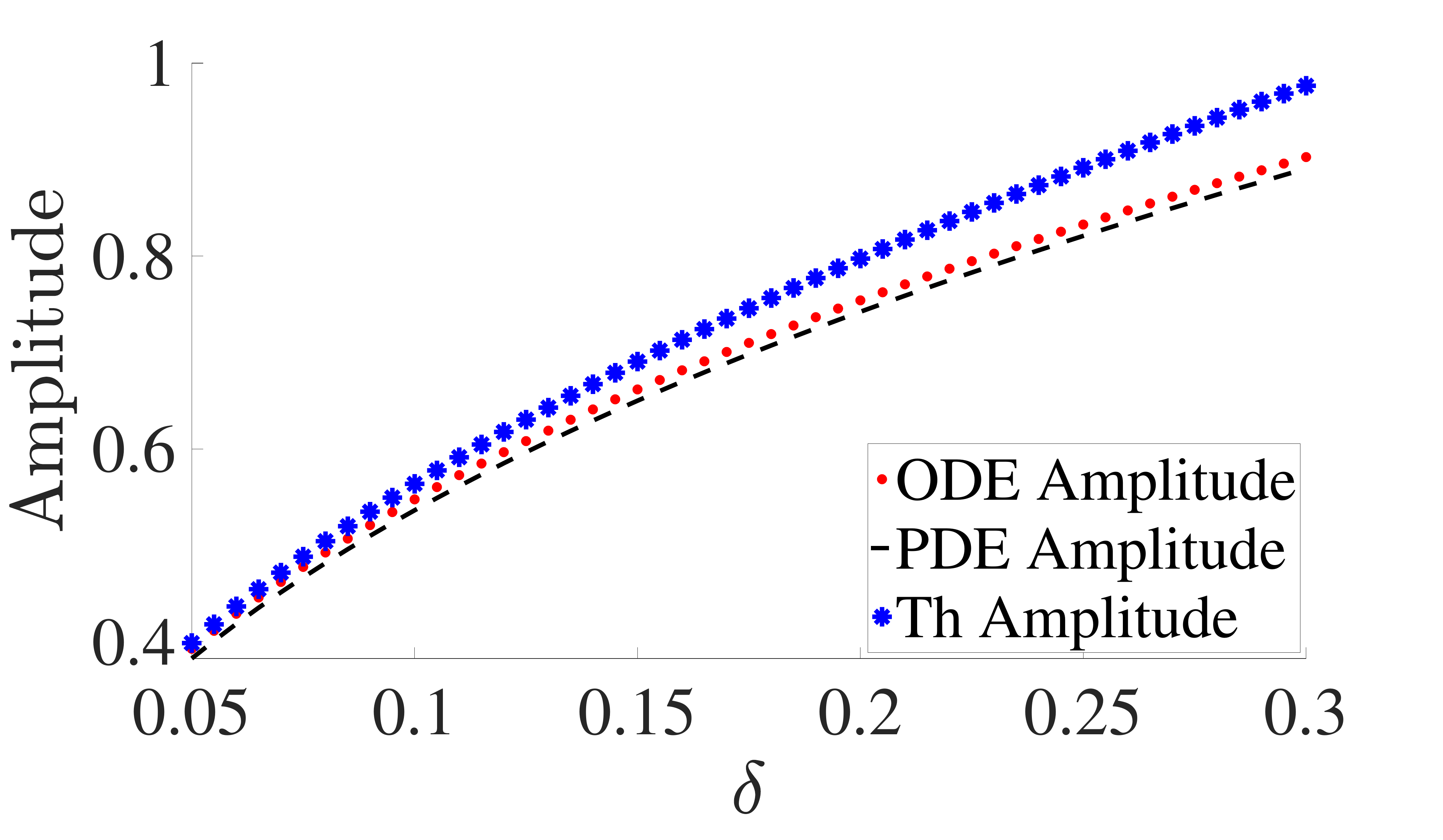}}
\hfil
    \subfloat[]{\includegraphics[width=0.5\linewidth]{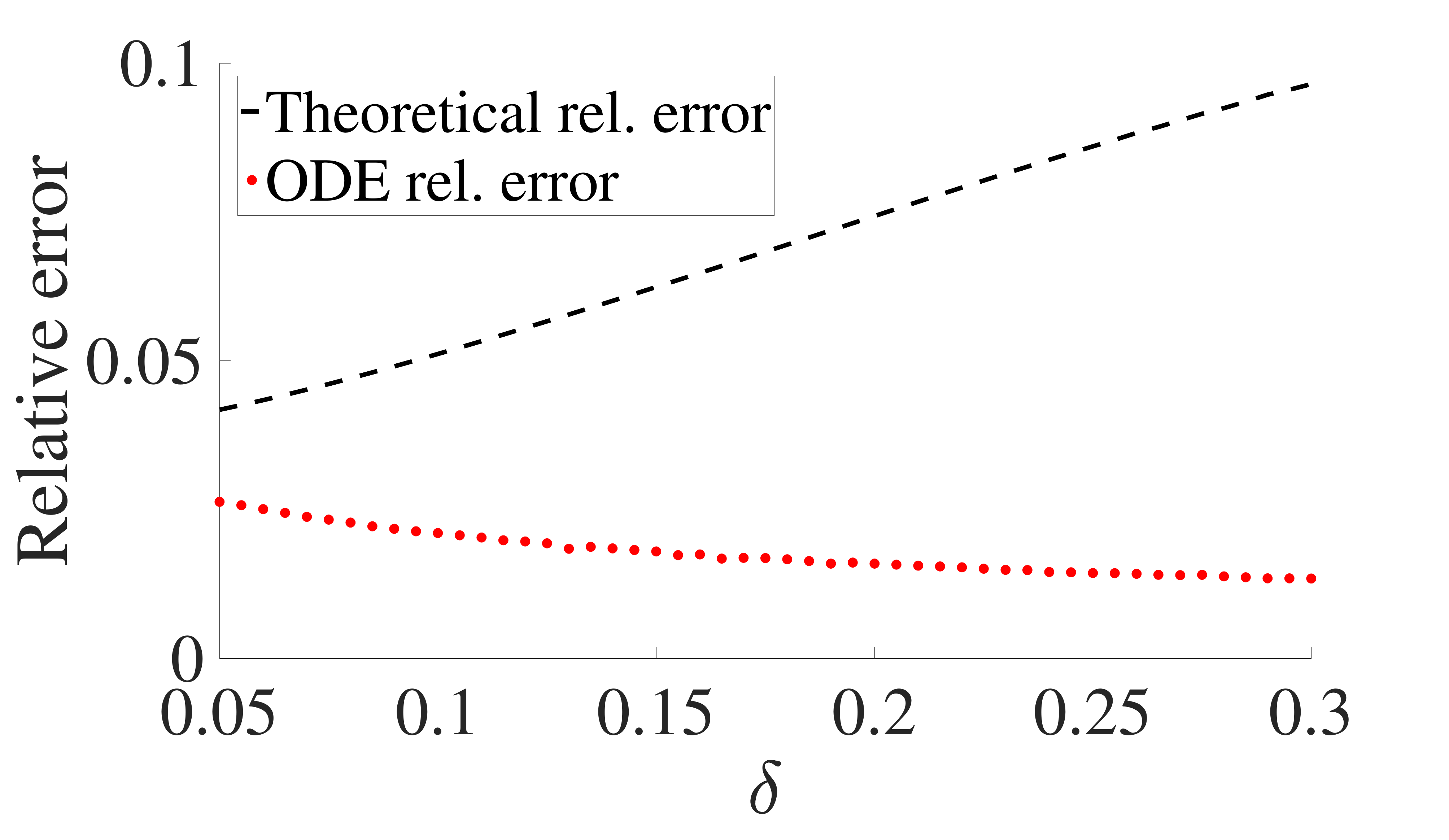}}
    \caption{Two-row model. (a) Displacement amplitude vs. relative distance to instability: the red curve shows the amplitude from the ODE \eqref{eq: ODE p1 q1 th general}, the black curve from the PDE \eqref{eq: 2 layers full PDE} and the theoretical amplitude from \eqref{eq: th amplitude Hopf parameters choice} in blue. (b) Relative error results between the theoretical amplitude and the PDE (in red), and the ODE and the PDE (in black).}
    \label{fig:ampl_ODE_2rows_err_2rows }
\end{figure}

In Figure \ref{fig:ampl_ODE_2rows_err_2rows }(a), we observe the amplitude of $x(t)$ computed with the PDE, the ODE, and the analytical result \eqref{eq: th amplitude Hopf parameters choice}. As expected, the latter loses its prediction capability as we go away from the instability; and subsequently, the relative error between the formula \eqref{eq: th amplitude Hopf parameters choice} and the one measured from the PDE increases, as we can see in Figure \ref{fig:ampl_ODE_2rows_err_2rows }(b). This is due to the contribution of the non linear terms, which play a role in determining the amplitude of oscillations even when the system is close to $\Omega_0$. \\

Note that the two-row model does not only center the equilibrium point for the oscillations around zero, but it also influences important physical quantities such as the amplitude of oscillations. This is emphasized in Figure \ref{fig: amplitude 1rowVS2rows} where the theoretical expansions for both models are plotted (the expansion \eqref{eq: th amplitude Hopf} for the two-row model can be adapted to the one-row model). 
\begin{figure}[hbtp!]
\centering
    \includegraphics[width=0.5\linewidth]{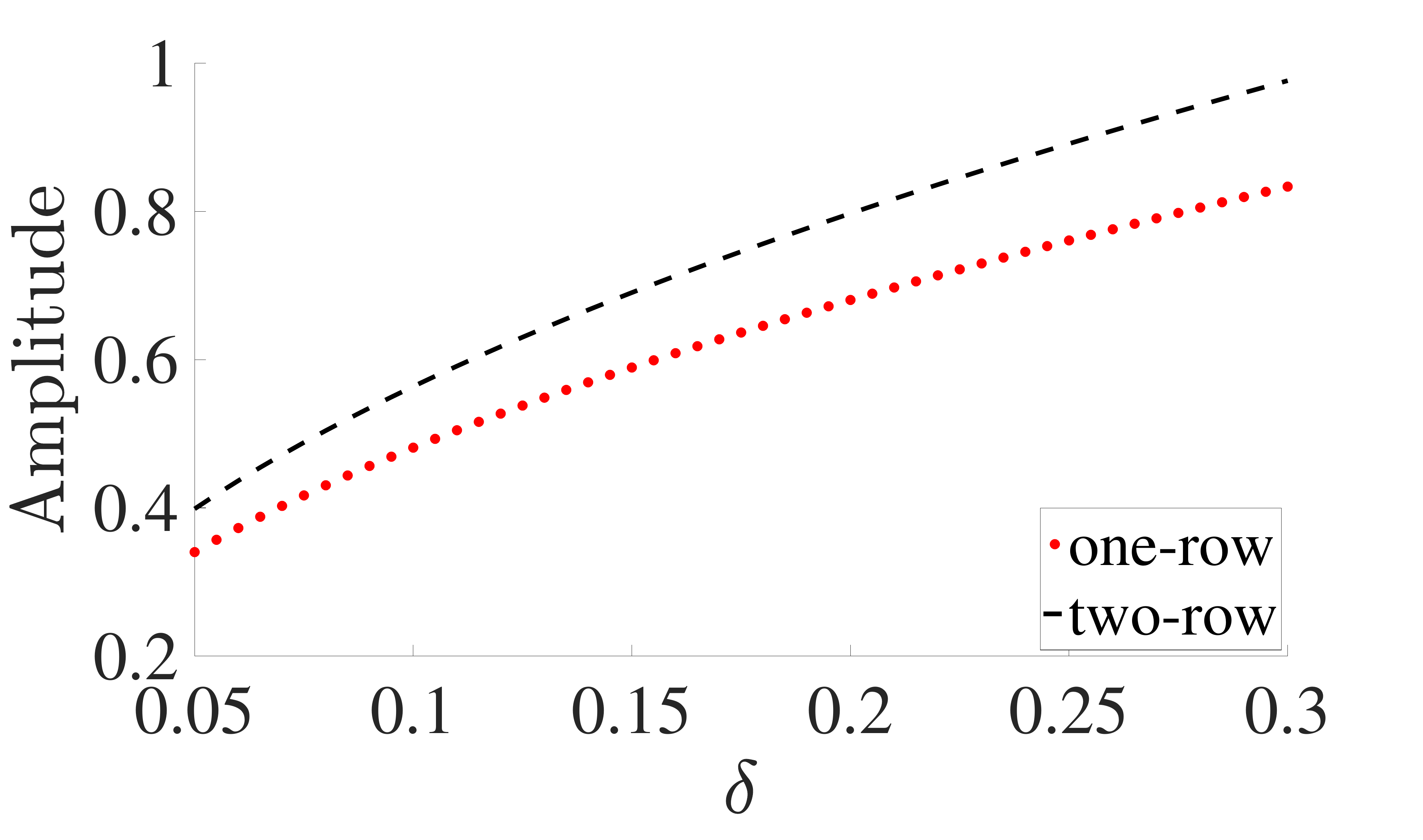}

  \caption{Theoretical amplitude for the two-row model computed with formula \eqref{eq: th amplitude Hopf} in black and its equivalent for the one-row model, in red, against the distance from instability $\delta$.}
    \label{fig: amplitude 1rowVS2rows}
    \end{figure}

\medskip
\subsubsection{\corr{Parameters sensitivity}}

\corr{In this section, we examine the sensitivity of the two-row model ODE \eqref{eq: ODE p1 q1 th general} to the physical parameters $ k $, $ \eta $ and $ \ell $, spring stiffness, viscosity and length of the slice, respectively. To assess their impact on the system, we analyze the oscillation amplitude and frequency by varying one parameter while keeping the others fixed. The results, shown in Figure \ref{fig:SA}, are obtained using MATLAB’s \texttt{ode45} solver at $ \Omega = \Omega_0(1+\delta) $ with $ \delta=0.05 $. 

Our findings indicate that changes in $ \ell $ mostly affect the oscillation amplitude, that goes from 0.3 to 0.8, while the frequency remains nearly constant, decreasing from 7.99 Hz to 7.98 Hz. Conversely, variations in the viscosity coefficient $ \eta $ and the elastic coefficient $ k $ change the frequency, with minimal impact on amplitude. Specifically, as $ k $ increases, the amplitude shifts from 0.54 to 0.56, while the frequency rises from 60 Hz to 100 Hz. In contrast, increasing $ \eta $ reduces the amplitude from 0.56 to 0.54 and lowers the frequency from 120 Hz to 60 Hz.}
\begin{figure}[hbtp!]
\centering
  \subfloat[]{\includegraphics[width=0.5\linewidth]{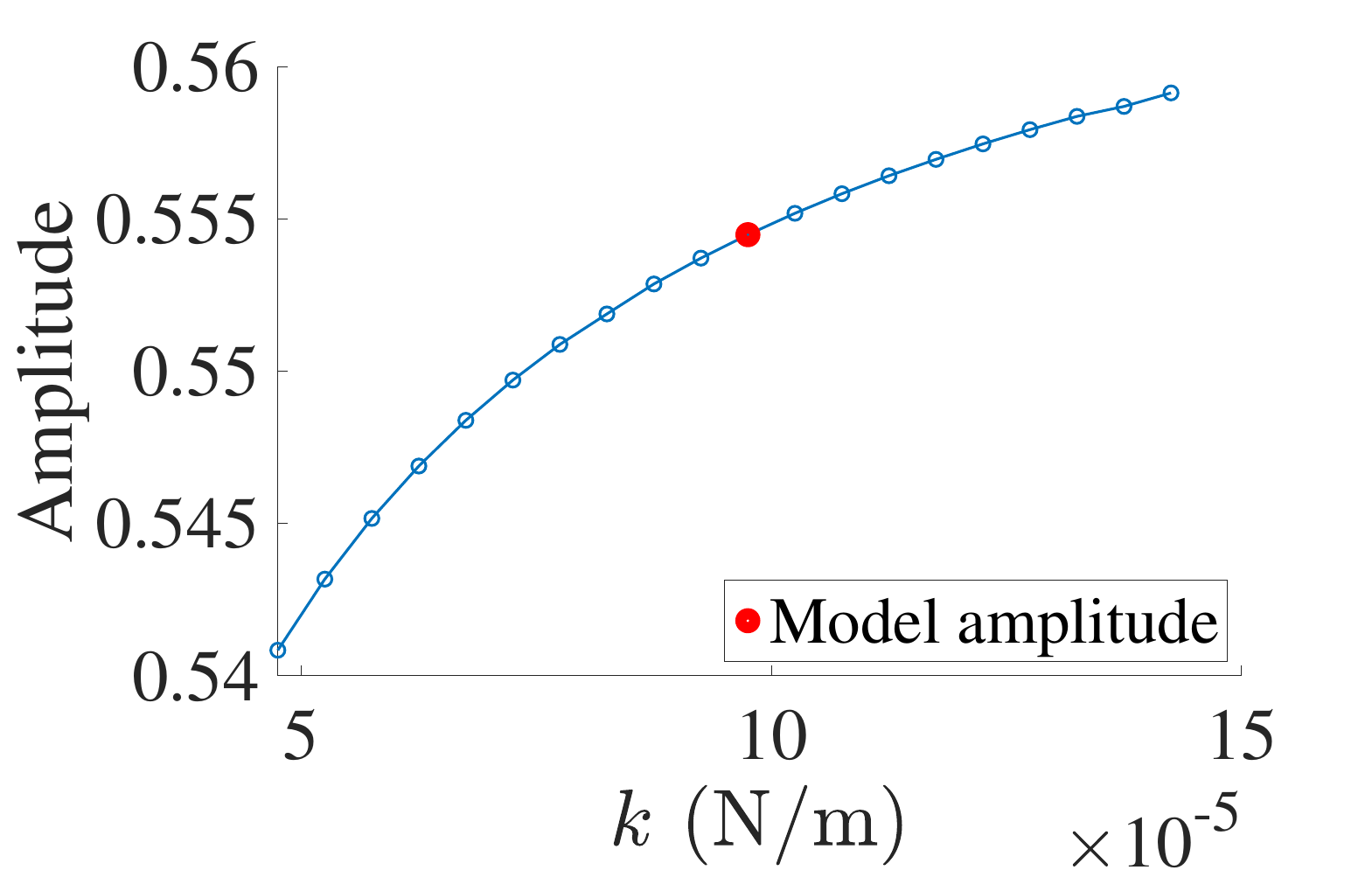}}
\hfil
    \subfloat[]{\includegraphics[width=0.5\linewidth]{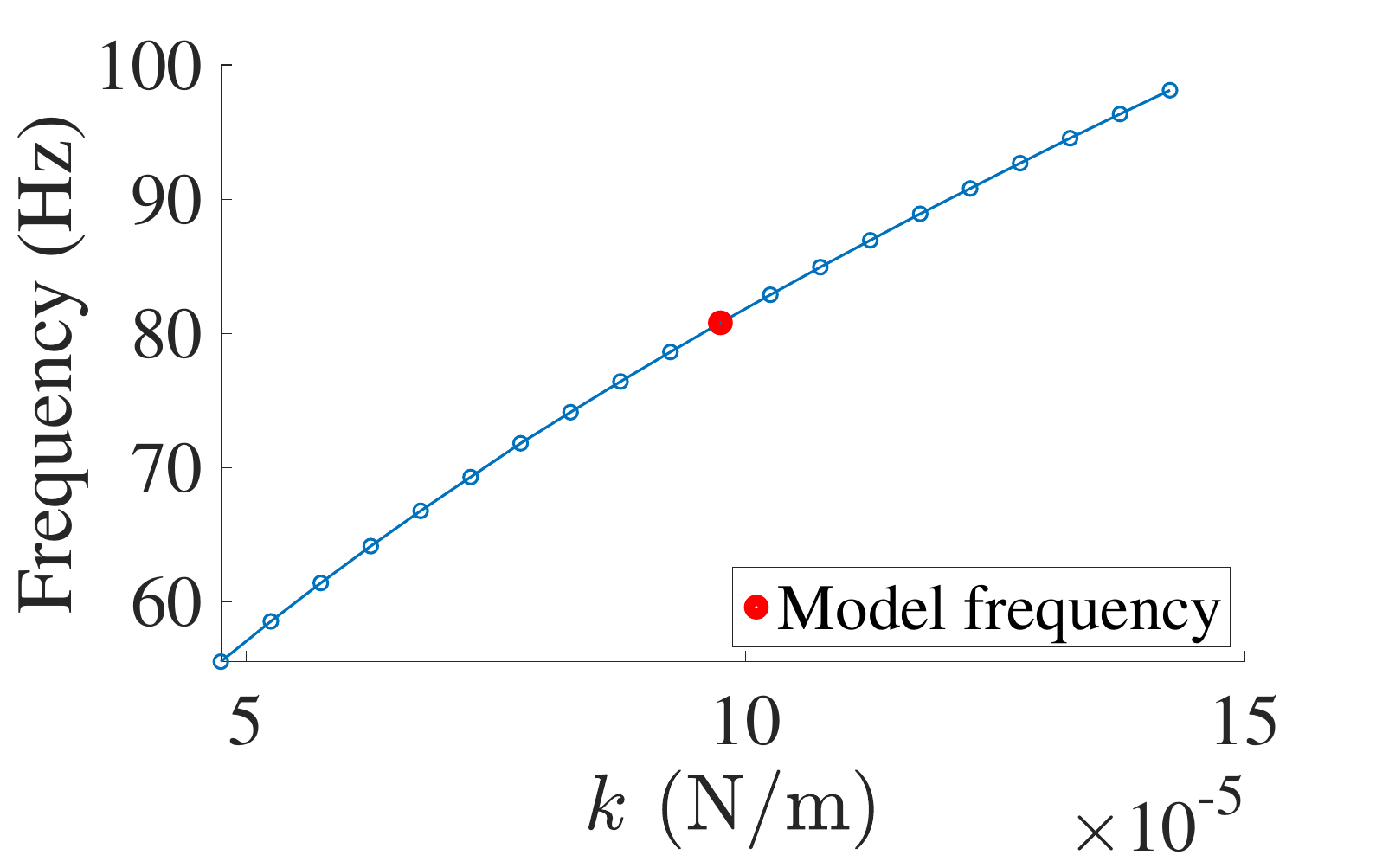}}

\vspace{0.4cm}
\centering
  \subfloat[]{\includegraphics[width=0.5\linewidth]{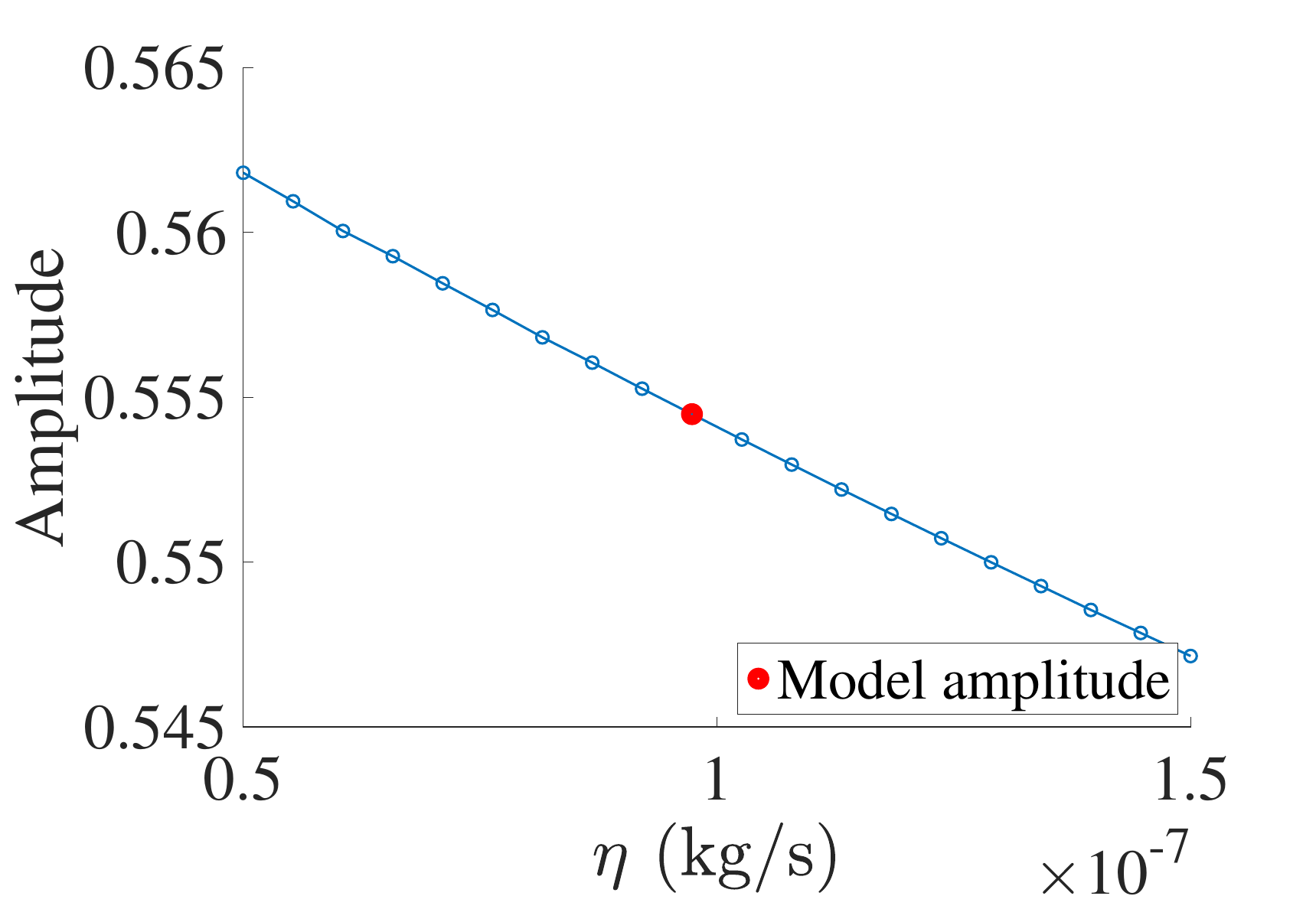}}
\hfil
    \subfloat[]{\includegraphics[width=0.5\linewidth]{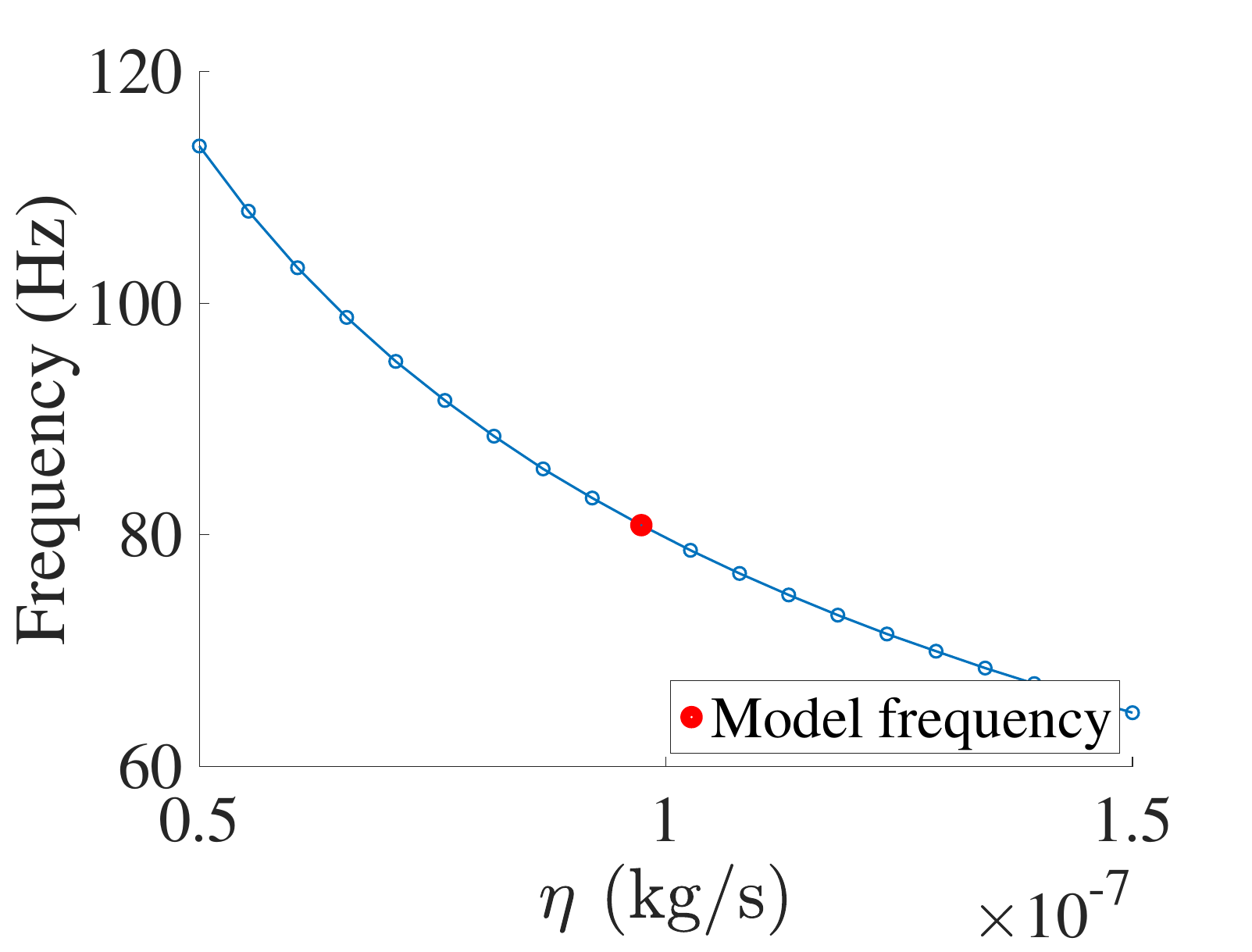}}
    
\vspace{0.4cm}
\centering
  \subfloat[]{\includegraphics[width=0.5\linewidth]{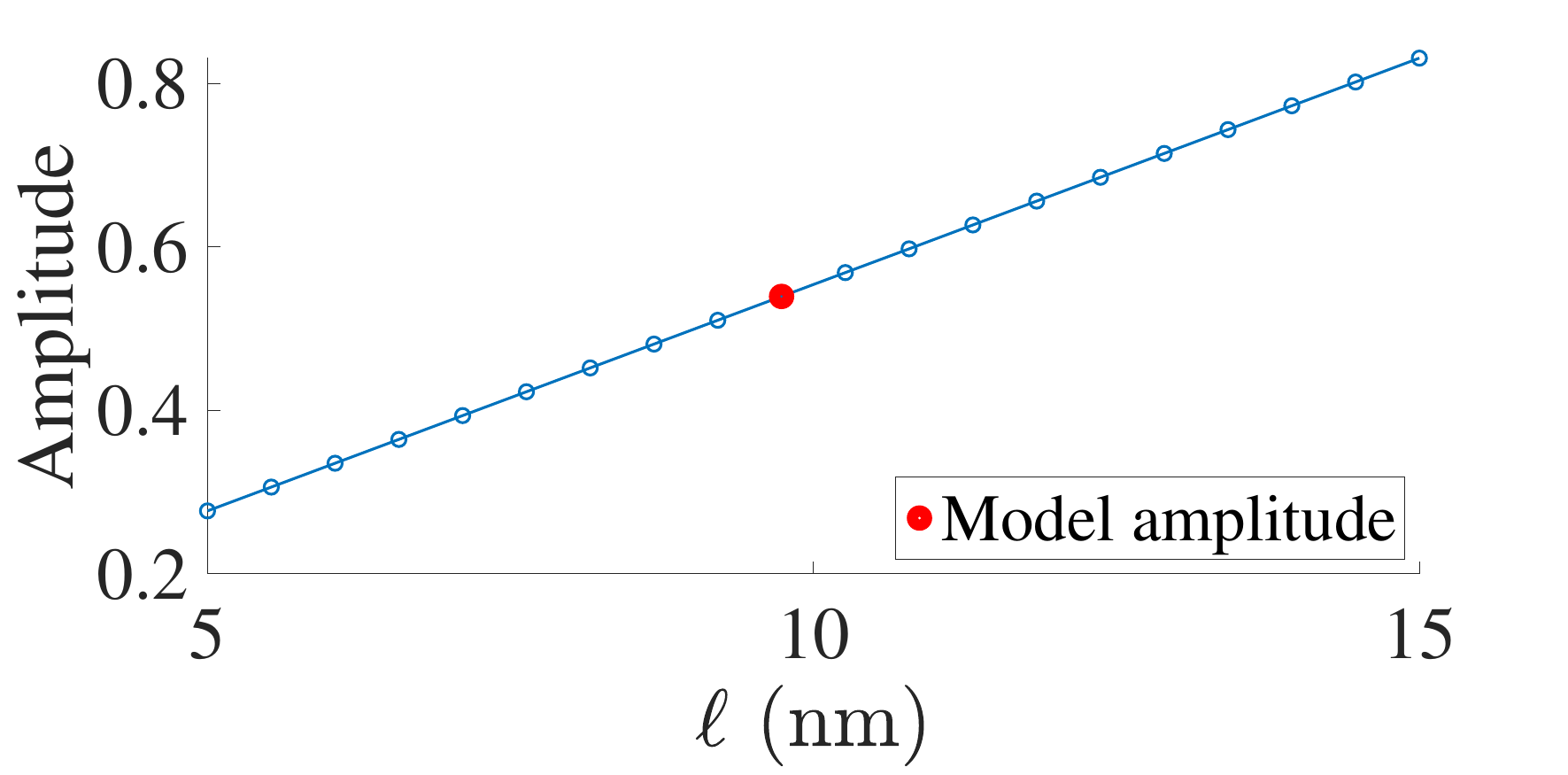}}
\hfil
    \subfloat[]{\includegraphics[width=0.5\linewidth]{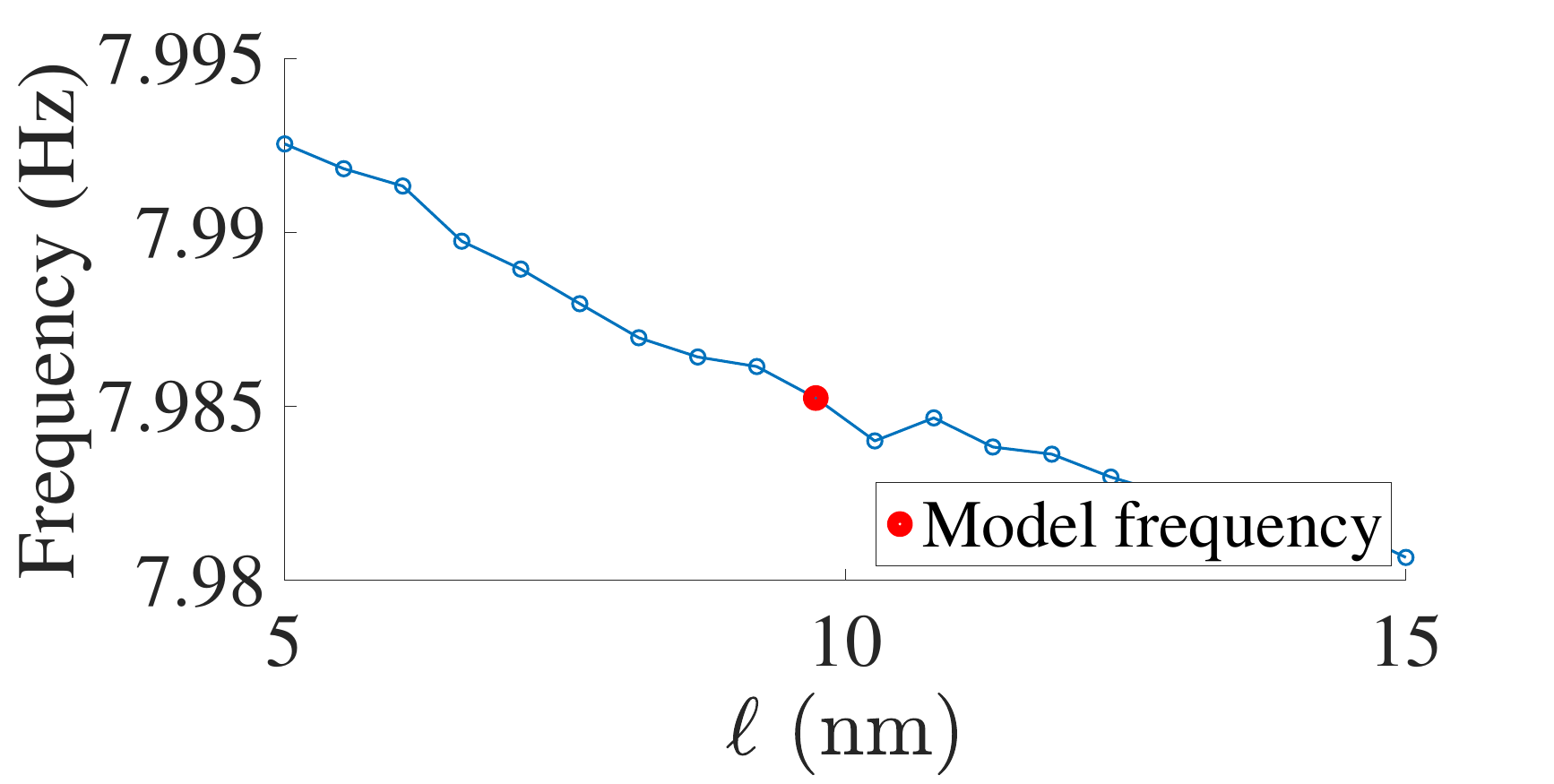}}
  \caption{Amplitude (first column) and frequency of oscillation (second column) of the two-row model ODE \eqref{eq: ODE p1 q1 th general} by varying the physical parameters ($k$, $\eta$, $\ell$). The system was captured at the critical value of $\Omega= \Omega_0(1.05)$. In red $k$, $\eta$, $\ell$ are the model parameter shown in Table 1, they where all decreased and increased of 5\% to perform sensitivity analysis.}
    \label{fig:SA}
\end{figure}

\subsection{$N$-row model: Numerical results}
In this section, the $N$-layer model is tested for $N=8$, to match the $8$ motor rows of an axoneme with a bridge and nine microtubule doublets, as in Figure \ref{fig:simple_axoneme}. We use the same parameter values as for the previous systems, as specified in Table 1.
The initial conditions are such that $Q_i(t=0) = Q_{eq,i}$ for all $i$, and we destabilize the $x_i$s randomly in $[-\bar x, \bar x]$, where $\bar x = 0.01 \, nm$.

Using the same notation as before, 
i.e. $\Omega = (1+\delta) \Omega_0$, we measure the relative distance to the bifurcation point $\Omega_0$ using $\delta$.

\begin{figure}[ht!]
\centering
  \subfloat[]{\includegraphics[width=0.5\linewidth]{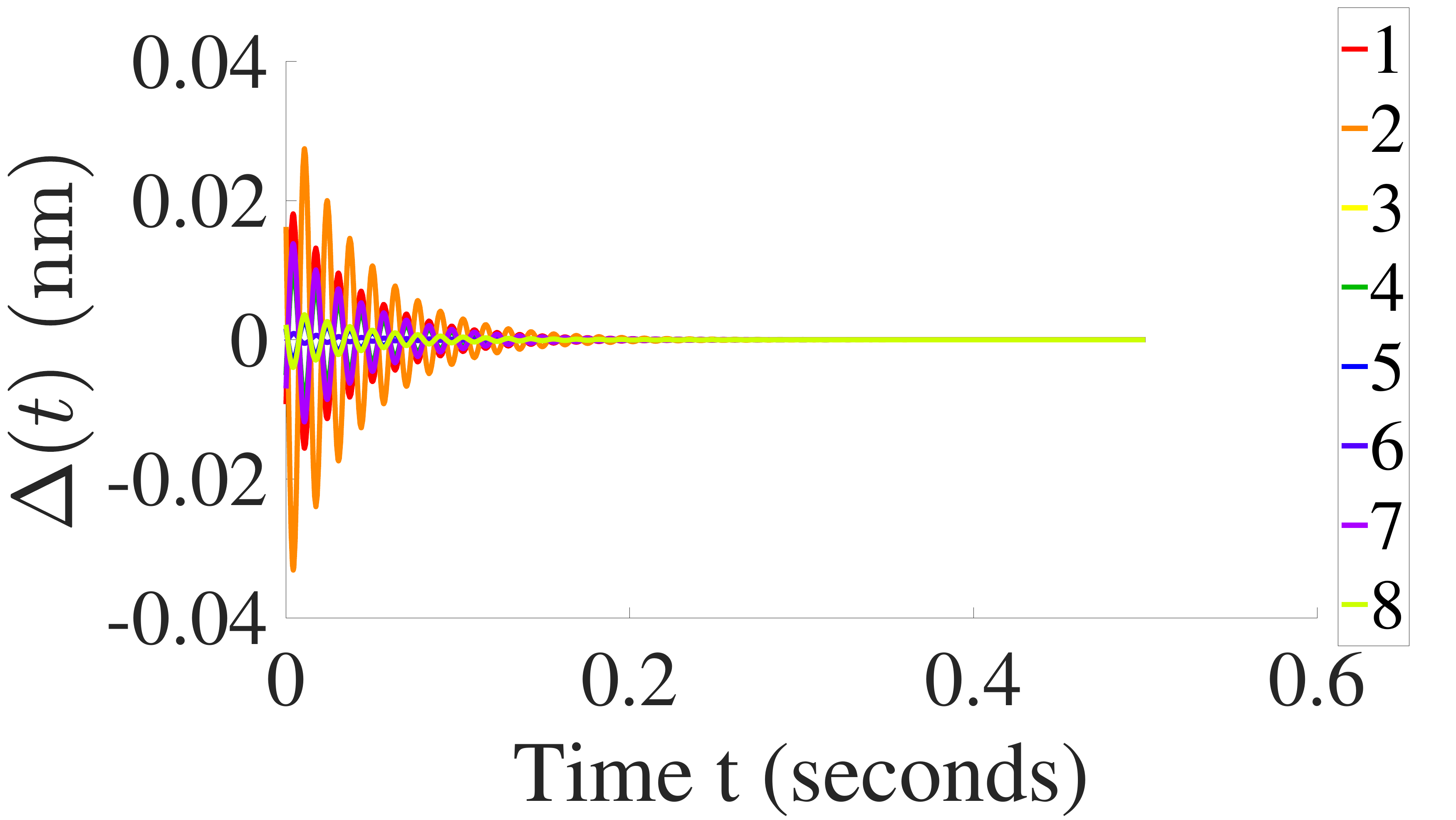}}
\hfil
  \subfloat[]{\includegraphics[width=0.5\linewidth]{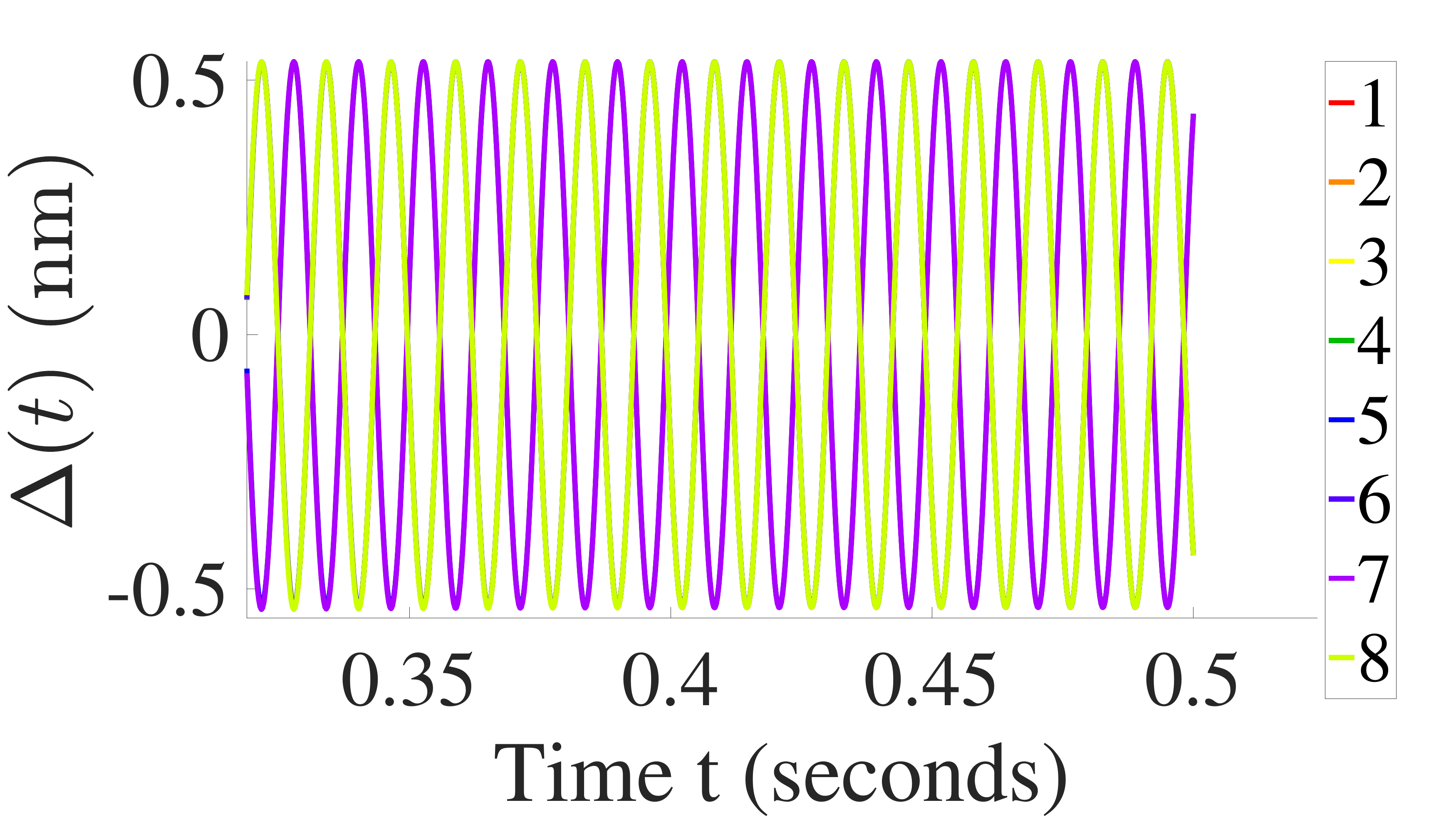}}
\subfloat[]{\includegraphics[width=0.5\linewidth]{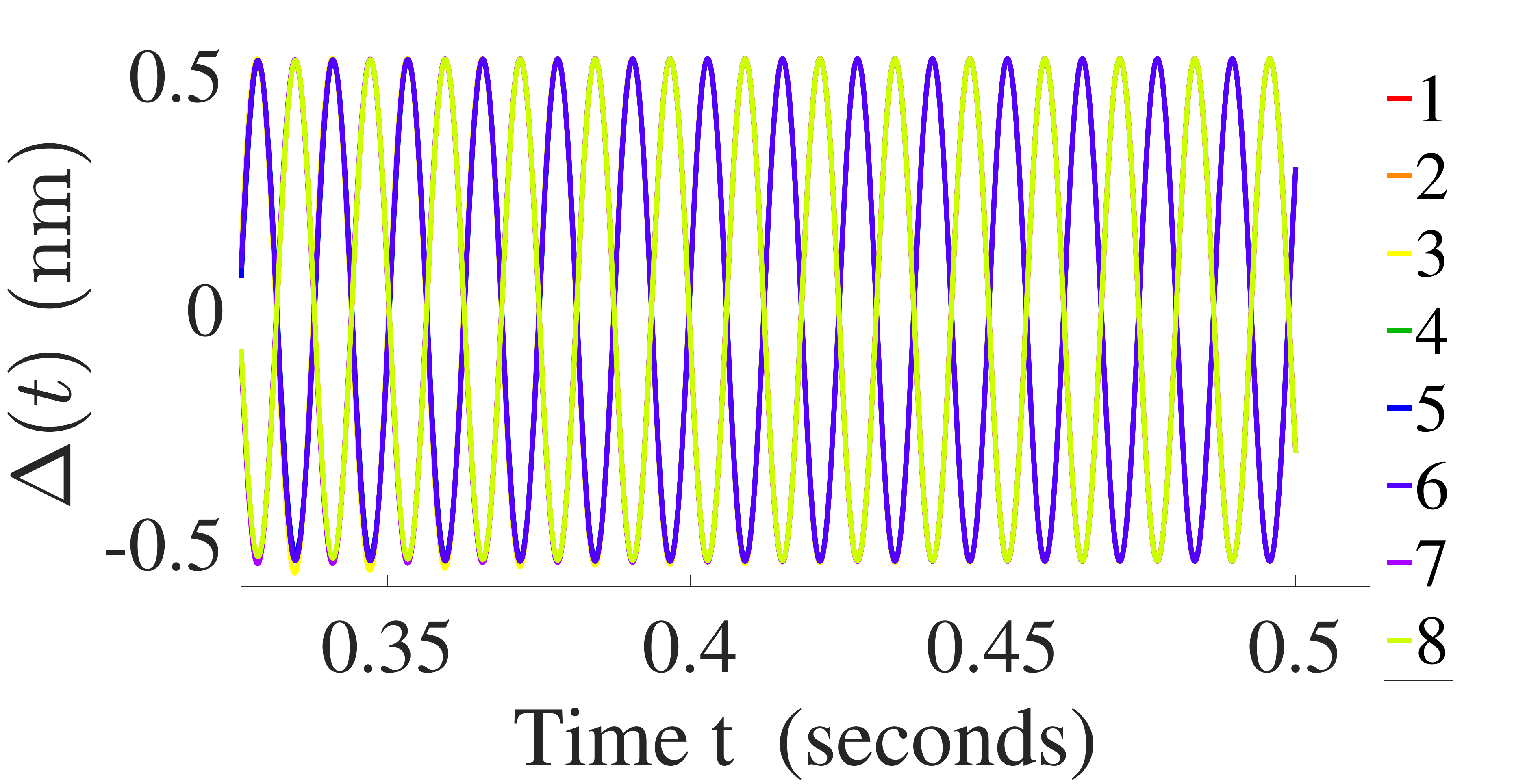}}
	\caption{$N$-row model ($N=8$). Relative tubule displacement (in nm) with respect to time (in seconds) for random initial conditions (a) before bifurcation point, for $\delta = -0.1$ ($\Omega =0.9 \Omega_0$) and (b) and (c) after bifurcation point , for $\delta = 0.1$ ($\Omega = 1.1 \Omega_0$), for two sets of random initial conditions.}
 \label{fig: Nrows}
\end{figure}

Figure \ref{fig: Nrows}(a) shows the absence of oscillations before instability ($\Omega = 0.9\Omega_0$, i.e. $\delta=-0.1$), exactly as expected. Even though the system is put out of equilibrium by the initial conditions, since some tubule shifts are nonzero, they all quickly go back to zero and stop moving.


In Figure \ref{fig: Nrows}(b), we look at the system past the instability point, for $\Omega = 1.1 \Omega_0$ i.e. $\delta=0.1$. Here, the system clearly reaches a steady-state oscillating regime after a short amount of time. All oscillations also remain centered in zero. As expected from the linear study in section \ref{sec: Nbifurcation}, all $\Delta_i$s have the same amplitude and oscillation frequency, but their phase difference varies depending on initial conditions. In the particular case shown in Figure \ref{fig: Nrows}(b), one group of tubule pairs is synchronized with pair $7$, and the other group with pair $8$. In fact, when looking at all layers separately, one can see that odd layers (respectively even layers) oscillate in sync. The other possible outcome when running the simulation with random initial conditions was groups $\{1,\,2,\,7,\,8\}$ and $\{3,\, 4,\,5,\,6\}$ oscillating together, as shown in Figure  \ref{fig: Nrows}(c). 
This influence of the initial conditions is understandable since there is no external force taking the whole axonemal structure and filament into account, resulting in limited coupling between layers at this level. The layers thus tend to keep their original phase difference.
The synchronization shown in Figures \ref{fig: Nrows}(b) and \ref{fig: Nrows}(c) was predicted by Howard et al. \cite{Howard2022}, and proves the importance of having a bridge around which the system alternates between positive and negative displacement.
In the general case, since there is no theory behind the bifurcation of this system of $3N-1$ equations, formally understanding the coupling between phase difference and initial conditions remains an open question. 

A similar problem has recently been numerically studied in Kuramoto oscillators \cite{BAYANI2023}. The Kuramoto model is fairly different from ours but exhibit synchronization of coupled oscillators. 
Some first steps, only modeling individual motors along a single row, have been presented in \cite{Costantini_2024}.

\section{Conclusion and outlook}

In this paper, we propose a model that takes into account the cylindrical structure of an isolated axonome. Compared to the existing literature, this model generalizes previous model by \cite{Julicher1995} that possess only one row of molecular motors to the case of $N$, where $N$ represents the number of rows of motors walking between two microtubule doublets that are arranged in a circular configuration. 

The first result on our model can be appreciated starting from the $N=2$ case. The cylindrical structure answers the problem of the axonemal symmetrization proposed in \cite{camalet2000generic}, previously solved by choosing symmetric transition rates. Instead, in of this paper, general transition rates have been used all along. More in general, even in the $N$-row model, we observed that the equilibrium displacement with null external force was zero, independently from the potentials and the transition rates. This means that the desired symmetry of the axoneme is always preserved: at equilibrium, with no external forces, each microtubule has the same role and there is no initial shifting in the structure. We underline that this is made possible in our model without imposing specific symmetry for the potentials.

The problem appears as a integral partial differential equation for which we provide a mathematical framework to show its well posedness.

In the case where $N=2$, we give the necessary and sufficient conditions for the system to get an Hopf bifurcation depending on the ATP concentration $\Omega$. Using the center manifold theory we have also shown that this bifurcation is supercritical. Physically it means that the molecular motors spontaneously oscillate under varying ATP concentrations around the axoneme.

We analyze the cases of $N=2$ and $N=8$ using both theoretical methods and numerical simulations. For the case $N=8$, under appropriate initial conditions, we note the emergence of two out-of-phase groups among the microtubules, oscillating around the zero position with opposite shifting directions.\\


A natural extension to our work would be to take into account the presence of the central microtubule pair and, therefore, of the radial spokes coming from outer doublets towards the axonemal center. Moreover, this model can be thought of as a building block that, coupled with mechanics of an elastic flagellum, gives the feedback that generates oscillatory patterns following in the footsteps of \cite{camalet2000generic, gadelha2023, oriola2017nonlinear, Walker2020, anello2024beatingeukaryoticflagellahopf} for the planar case and of \cite{sartori2016twist} for the three dimensional case.



\section*{Funding}
This work was supported by a public grant as part of the Investissement d'avenir project, reference ANR-11-LABX-0056-LMH, LabEx LMH.

\corrThm{\appendix
\section{Details on Theorem \ref{th: Hopf bifurcation}}
\subsection{Center manifold computation}\label{appendix: center manifold computations}
From now on we define $a_{10}(\Omega)= a_1(\Omega)/a_0(\Omega)$ for simplicity.

Equation \eqref{eq: N(h)=0 general} is a system of three equations. We focus here on the first one, recovering the coefficients $a_{1j}$ of $h_1=h_1(x,y,\delta\Omega)$ with $j=1,\dots,6$; the same can be done for the second and third equation, solving for the power series of $h_2$ and $h_3$.

The first equation reads
\begin{equation*}
    \begin{split}
         (2 a_{11} y + a_{12}  x + a_{13}  \delta\Omega\, ) &\left(-a_0(\Omega_0)y - \frac{\lambda}{\ell} a_{10}(\Omega_0) y  - \frac{\zeta}{\ell} a_{10}(\Omega_0)x\right.\\
         &\left.-a_0'(\Omega_0)\delta\Omega\,\,  y
        - \displaystyle\frac{\lambda}{\ell} a_{10}'(\Omega_0)\delta\Omega\,\, y -\displaystyle \frac{\zeta}{\ell} a_{10}'(\Omega_0)\delta\Omega\,\, x \right.\\
        &- \left. \displaystyle\frac{{b_1(\Omega_0)}(h_1+h_2-2 h_3)}{2 {a_1(\Omega_0)}}(\zeta  x+\lambda  y) \right) \\
        & + (a_{12} y  + 2 a_{14}  x + a_{15}  \delta\Omega\, )  \left(-\frac{\beta  \ell}{2 \pi } y -\frac{\alpha  \ell}{2 \pi } x\right) \\
        & + a_0(\Omega_0) h_1 +  a_0'(\Omega_0)\delta\Omega\,\,  h_1\\
        &- (\zeta  x+\lambda  y) \left(\displaystyle\frac{ (a_1^2(\Omega_0) (y+h_3)+b_1^2(\Omega_0) (-h_1+y+h_3)}{{a_1(\Omega_0)} b_1(\Omega_0)}\right) = 0.
    \end{split}
\end{equation*}

Since we only need terms $x$, $y$ and $\delta\Omega\,$ up to order two, we get rid of all the third order terms, getting
\begin{equation} \label{eq: first row h1}
    \begin{split}
         (2 a_{11} y + a_{12}  x + a_{13}  \delta\Omega\, ) &\left(-a_0(\Omega_0)y - \frac{\lambda}{\ell} a_{10}(\Omega_0) y  - \frac{\zeta}{\ell} a_{10}(\Omega_0)x \right)\\
        & + (a_{12} y  + 2 a_{14}  x + a_{15}  \delta\Omega\, )  \left(-\frac{\beta  \ell}{2 \pi } y -\frac{\alpha  \ell}{2 \pi } x\right) \\
        & + a_0(\Omega_0) h_1  - y(\zeta  x+\lambda  y)  \left(\displaystyle\frac{ a_1^2(\Omega_0) +b_1^2(\Omega_0)}{{a_1(\Omega_0)} b_1(\Omega_0)}\right) = 0.
    \end{split}
\end{equation}

We then proceed by matching the terms $x$, $y$ and $\delta\Omega\,$ with same order, and solve the resulting system of equations for $a_{1j}$ with $j=1,\dots,6$. Notice we could have kept only the zero order term in the Taylor expansion, since all the higher order terms disappear during this approximation.

From the first two equation we get the same sets of coefficients, namely we obtain for $i=1,2$
\begin{equation*}
\begin{split}
    &a_{i1}= -\frac{\pi \lambda ^2}{\ell(2 \zeta\ell + \pi a_0(\Omega_0))}\frac{a_1^2(\Omega_0)+b_1^2(\Omega_0)}{a_0^2(\Omega_0)b_1(\Omega_0)}\\
    & a_{i2}= \frac{\zeta( \zeta \ell - \pi a_0(\Omega_0))}{(2 \zeta + \pi a_0(\Omega_0))}\frac{a_1^2(\Omega_0)+b_1^2(\Omega_0)}{a_0(\Omega_0)b_1(\Omega_0)}\\
    &a_{i3}= 0, \quad a_{i4}= -\frac{\pi \zeta^2}{\ell(2 \zeta \ell + \pi a_0(\Omega_0))}\frac{a_1^2(\Omega_0)+b_1^2(\Omega_0)}{a_0^2(\Omega_0)b_1(\Omega_0)}\\
    &a_{i5}= 0, \quad a_{i6}= 0.
\end{split}
\end{equation*}
Instead, for $i=3$ we get
\begin{equation*}
\begin{split}
    &a_{31}= -\frac{\pi \lambda ^2}{\ell(2 \zeta\ell + \pi a_0(\Omega_0))}\frac{b_1(\Omega_0)}{a_0^2(\Omega_0)}\\
    & a_{32}= \frac{\zeta( \zeta \ell - \pi a_0(\Omega_0))}{(2 \zeta + \pi a_0(\Omega_0))}\frac{b_1(\Omega_0)}{a_0(\Omega_0)a_1(\Omega_0)}\\
    &a_{33}= 0, \quad a_{34}= -\frac{\pi \zeta ^2}{\ell(2 \zeta\ell + \pi a_0(\Omega_0))}\frac{b_1(\Omega_0)}{a_0^2(\Omega_0)}\\
    &a_{35}= 0, \quad a_{36}= 0.
\end{split}
\end{equation*}

\subsection{Computation for $\tilde \tau$} \label{app: tilde tau}
We now consider system \eqref{eq: Hopf ODE final} at the bifurcation point
\begin{equation} 
     \frac{d}{dt}\tilde{Y} = \begin{pmatrix}
    0 &-\omega(\Omega_0)\\
        \omega(\Omega_0) &0
\end{pmatrix} \tilde{Y} + \begin{pmatrix}
    0\\
    \tilde f^t (x,y)
\end{pmatrix}+O(|\delta\Omega|^3+ |Y|^3).
\end{equation}
where \[ \begin{pmatrix}
    0\\
    \tilde f^t (x,y)
\end{pmatrix}=\mathbb{P}^{-1}f(\mathbf{h}(\mathbb{P}\tilde Y),\mathbb{P}\tilde Y)\] is the transformed non linear term.

The formula for $\tilde \tau$ can be found in \cite{Wiggins} (Chapter 20, Section 2) and is the following
\begin{equation*}
    \tilde{\tau}=-\frac{1}{16 \,\omega(\Omega_0)}\left(\partial^2_{yy}\tilde{f^t}+\partial^2_{xx}\tilde{f^t}\right)\partial^2_{yx}\tilde{f^t}+\frac{1}{16} \left(\partial^3_{yyy}\tilde{f^t}+\partial^3_{xxy}\tilde{f^t}\right),
\end{equation*}
where all the derivatives are evaluated at the bifurcation point, i.e $(0,0,\Omega_0)$.}


\end{document}